\newtheorem{theorem}{Theorem}[section]
\newtheorem{corollary}[theorem]{Corollary}
\newtheorem{lemma}[theorem]{Lemma}
\newtheorem{proposition}[theorem]{Proposition}
\numberwithin{equation}{section}
\newtheorem{acknowledgements}{Acknowledgements}
\newcommand{\G}{{\mathcal G}}
\newcommand{\Lam}{{\mathcal L}}
\newcommand{\diag}{\mathop{\mathrm{diag}}\nolimits}
\providecommand{\keywords}[1]{\textbf{\textit{Keywords---}} #1}
\begin{document}

\title{Optimal BIBD-extended designs }
\author{ Cakiroglu, S. Aylin, \footnote{Present address: Francis Crick Institute,  1 Midland Road, NW1 1AT London, UK}\\
School of Mathematical Sciences, Queen Mary University of London,\\ Mile End Road, London E1 4NS, UK\\ \emph{e-mail: aylin.cakiroglu@crick.ac.uk} 
\and
 Cameron, Peter J.\footnote{Present address: School of Mathematics and Statistics 
University of St Andrews, North Haugh, St Andrews, Fife KY16 9SS, SCOTLAND}\\ 
School of Mathematical Sciences, Queen Mary University of London,\\ Mile End Road, London E1 4NS, UK\\ \emph{e-mail: pjc20@st-andrews.ac.uk}}

\maketitle

\begin{abstract}
Balanced incomplete block designs (BIBDs) are a class of designs with $v$ treatments and $b$ blocks of size $k$ that are optimal with regards to a wide range of optimality criteria, but it is not clear which designs to choose for combinations of $v$, $b$ and $k$ when BIBDs do not exist. 

In 1992, Cheng showed that for sufficiently large $b$, the designs which are optimal with respect to commonly used criteria (including the $A$- and $D$- criteria) must be found among $(M.S)$-optimal designs. 
In particular, this result confirmed the conjecture of John and Mitchell in 1977 on the optimality of regular graph designs (RGDs) in the case of large numbers of blocks.

We investigate the effect of extending known optimal binary designs by repeatedly adding  the blocks of a BIBD and find boundaries for the number of block so that these BIBD-extended designs are optimal. In particular, we will study the designs for $k=2$ and $b=v-1$ and $b=v$: in these cases the $A$- and $D$-optimal designs are not the same  but we show that this changes after adding blocks of a BIBD and the same design becomes $A$- and $D$-optimal amongst the collection of extended designs.
Finally, we characterise those RGDs that give rise to $A$- and $D$-optimal extended designs and extend a result  on the $D$-optimality of the a group-divisible design to $A$- and $D$-optimality amongst BIBD-extended designs.

\keywords{$A$-optimality, $D$-optimality, Incomplete block design, Regular graphs, Regular graph design, BIBD-extended design }

\end{abstract}

\section{Introduction and Preliminaries}\label{intro}

One of the main questions in the theory of optimal designs  is how to design experiments for the comparison of treatments when the available experimental units are  also affected by other nuisance or blocking factors.  The blocking factors are usually of no experimental interest, however they influence the measurements of the  treatment responses and must be accounted for in the experimental design. Specifically, suppose there are $v$ treatments to be compared on a number of experimental units that can be partitioned into $b$ blocks of size $k$ with $k<v$. These blocks might differ systematically but all units in a block are assumed to be alike. To each unit, one of the treatments will be applied, after which the response of the unit is measured. 
This assignment of treatments to the blocked experimental units is called a \emph{block design}. 
Since we are interested in the comparison of treatments, rather than estimating the response of a single treatment, we estimate a set of treatment contrast, that is a linear combination of the treatment effects whose coefficients sum up to zero. 
The variances of estimators of the unknown parameters are a function only of the treatment-unit assignment and can be calculated (up to a scalar multiple) before any measurements are taken. 
Therefore, the design optimality question often is the search for a treatment-unit assignment so that the estimate has the least possible variance.
This can be a multidimensional problem and a design can be deemed good in different ways. 

We give  the basic  definitions in the following and the reader is referred to \cite{Shah} for numerous references and  \cite{CameronBailey} where more details on the application of combinatorics in the theory of design of experiments can be found.
We will assume that both treatment and blocking factors affect the mean response additively, and that responses are otherwise subject to equivariable noise that is uncorrelated from one unit to another. 
If all pairwise differences  are estimable, the design is said to be \emph{connected}. 
The design is called \emph{binary}, if every treatment occurs at most once per block. We will always assume the designs to be binary and that $k<v$ (but not necessarily connected).
The \emph{replication} $r_i$ of a treatment $i$ is the total number of units that have been assigned treatment $i$, and  if the replications are all equal to a constant $r$, the design is called \emph{equireplicate}. 
Let $N_d$ be the $v\times b$ treatment-block incidence matrix of the design $d$, i.e. the $ij$-entry of 
$N_d$ is the number of units in block $j$ that have been assigned treatment $i$. The  \emph{concurrence} of treatments $i$ and $j$ is the  $ij$-entry of the product 
$N_dN_d^T$ and will be denoted by $\lambda_{ij}$. For binary designs, $\lambda_{ii}=r_i$ and $\lambda_{ij}$ is the number of blocks containing both treatments $i$ and $j$. 
The \emph{information matrix} for the estimation of the treatment effects is
\[
C_d=\diag(r_1,\ldots,r_v)-\frac{1}{k}N_dN_d^T.
\]
 Since $C_d$ has row sums $0$, the all-$1$-vector is an eigenvector with eigenvalue $0$. All other eigenvalues,  the \emph{non-trivial  eigenvalues}, are positive. If the design is connected, the rank of the information matrix is $v-1$, the eigenvalue $0$ has multiplicity $1$ and all non-trivial eigenvalues are strictly positive.  
The \emph{Laplacian matrix} of a binary connected  design is
\[
L_d=k \diag(r_1,\ldots,r_v)-N_dN_d^T=kC_d.
\]
The smallest eigenvalue of $L_d$ is $0$ and the other $v-1$ \emph{non-trivial Laplacian eigenvalues}  are all positive (and strictly positive if the design is connected). 
The non-zero eigenvalues of $C_d$ are the reciprocals of the variances  for a  set of  orthonormal treatment contrasts (aside from the variance of the noise). Therefore, we can define measures how good a treatment-unit assignment (or design) is based on summary functions of the non-trivial eigenvalues: 
The average variance  of the set of the best linear unbiased estimators of the pairwise differences of the treatment effects is proportional to the reciprocal of the harmonic mean of the non-trivial eigenvalues of $C_d$, and a design $d$ that minimizes this summary function is said to be $A$-optimal.  The volume of the confidence ellipsoid for any orthonormal contrasts is proportional to the product of the reciprocals  of the non-zero eigenvalues of $C_d$, and a design $d$ that minimizes this product is said to be $D$-optimal.  
There are many more optimality criteria; another popular example is the $E$-criterion which is the maximization of the smallest non-trivial eigenvalue of $C_d$ and is equivalent with minimizing the largest variance of the estimators.
A design that maximises the sum of the non-trivial eigenvalues and minimizes the sum of squares of the entries of the information matrix (among those that maximise the sum of the non-trivial eigenvalues) is called \emph{(M.S)-optimal} \cite{EcclestonHedayat}. However,  the $(M.S)$-optimal designs are not unique in their class and often not efficient on the other optimality criteria. If they exist, RGDs are $(M.S)$-optimal \cite{ChengMS}.
 A design is called \emph{Schur-optimal} if the non-trivial eigenvalues of $C_d$ majorize the non-trivial  eigenvalues of the information matrix of any competing design. Schur-optimality is a most general optimality criterion: a Schur-optimal design  minimizes any \emph{Schur-convex} function  of the non-trivial Laplacian eigenvalues, that is any function such that,  if $x\in {\mathcal A}\subset  {\mathbb R}^n$ is majorized by $y\in {\mathcal A}$ then $\Phi(x)\geq\Phi(y)$, whenever $x$ is not a permutation of $y$; examples of Schur-convex functions are the $A$-, $D$- and $E$- criteria.
Discussions of these and other optimality criteria can be found in \cite{Shah}.

There is no general answer to the question of which design is to be chosen for given $v$, $b$ and $k$, but there are several partial results for certain choices of $v$, $b$ and $k$:  \emph{balanced incomplete block designs} (in short BIBDs) are binary equireplicate incomplete block designs with replication $bk/v$, where $k<v$ and any pair of treatments is contained in exactly $\lambda$ blocks for some $\lambda>0$. BIBDs are optimal with regards to a wide range of criteria, in particular the $A$- and $D$-criteria \cite{Kiefer}, but it is not clear which designs to choose if no BIBD exists. 
Other popular designs are the \emph{regular graph designs} (in short RGDs); these are equireplicate binary designs in which any pair of points occurs in either $\lambda$ or $\lambda+1$ blocks for some integer $\lambda\geq 0$.The Laplacian matrix of an RGD $d$ with $v$ treatments, replication $r$ and block size $k$ can  be written as
\[
L_d=\{r(k-1)+\lambda\}{I}_v-{T}_d-\lambda{J}_v,
\]where  ${I}_v$ is the $v\times v$ identity matrix, ${J}_v$ denotes the $v\times v$ all-$1$-matrix and ${T}_d$ is a symmetric $v\times v$ $(0,1)$-matrix with $0$'s on the diagonal and exactly $r(k-1)-\lambda(v-1) $ number of $1$'s in each row and each column. Note that the Laplacian matrix of an RGD is therefore fully determined by  the parameters $v$, $k$, $r$ and the matrix ${T}_d$. 

The reference to graphs in the names of the RGDs is hinting towards a close relationship between  block designs and graphs that hinges on the  notion of the Laplacian matrix. First, we will need some basic definitions:
a graph $\G$ is a set of $v$ vertices and a set of edges that connect vertices $i \not= j$ (we do not allow any loops). The \emph{adjacency matrix} $A_{\G}$ of the graph $\G$ is the $v\times v$ matrix whose $ij$-entry is the number of edges joining vertices $i$ and $j$. A graph $\G$ is \emph{connected} if any vertex can be reached from any other vertex by going along edges.  We say that $\G$ is \emph{simple}, if $\G$ contains no multiple edges. The \emph{degree $\delta_i$ of the vertex} $i$ is the number of edges incident to $i$. If the degrees of all its vertices are equal to a constant $\delta$, the \emph{degree} of the graph, the graph is called \emph{regular} (note that its adjacency matrix has constant row and column sums).
 %A \emph{subgraph} of $\G$ is a graph whose vertex and edge sets are subset of the vertex and edge sets of $\G$.

The Laplacian matrix $L_{\G}$ of a graph $\G$ on $v$ vertices is defined as 
\[ 
L_{\G} = \diag(\delta_1,\ldots,\delta_{v}) - A_{\G}.
\]  Note that $L_{\G}$ is
a symmetric matrix with row and column sums zero.  A binary block design can be represented as a graph, called the \emph{concurrence graph}, by taking the treatments as vertices and joining any two distinct vertices $i$ and $j$ are joined by $\lambda_{ij}$ edges, where
$\lambda_{ij}$ is the concurrence of $i$ and $j$. Note that the concurrence
graph contains no loops, even if the design is not binary, but is not necessarily simple. If $\G$ is the concurrence graph of a binary design $d$ with block size $k$, then $\delta_i = r_i(k-1)$, $i=1,\ldots,v$ and then $L_{\G}= L_{d}$. 

All regular simple graphs with $v$ vertices of degree $\delta$  correspond to all symmetric $v\times v$-matrices with $(0,1)$-entries, zero diagonal and row and column sum $\delta$; that is the  matrix $T_d$ of a RGD $d$ is precisely the adjacency matrix of a simple regular graph of degree $r(k-1)-\lambda(v-1)$. 
John and Mitchell  conjectured that
if an incomplete block design is $D$-optimal (or $A$-optimal or $E$-optimal), then it is an RGD (if any RGDs exist) \cite{JohnMitchell}. This conjecture has been shown to be wrong for all three optimality criteria \cite{JonesEccleston, Constantine, RosemaryExamples}, but holds if the number of blocks is large enough \cite{ChengMS}.

In this context, we want explore the boundaries of when RGDs become optimal and the landscape of optimal designs with a large number of blocks. In patricular, we are interested  in the effect of adding the blocks of a BIBD  repeatedly to a design  on the performance of the design on the $A$- and $D$- criteria (for a treatment of the $E$-criterion see \cite{Morgan}). In the spirit of \cite{Morgan}, we call these designs \emph{BIBD-extended} designs. 
We will define a partial order on the Laplacian matrices and identify some bounds on the number of blocks for which BIBD-extended RGDs are $A$- and $D$-optimal among all competing BIBD-extended binary designs in Section \ref{order}. We will use these bounds in Section \ref{example} to prove the $A$- and $D$-optimality of some of the BIBD-extended designs. In Section \ref{RGD}, we will identify some bounds on the number of blocks for which a class of BIBD-extended RGDs are $A$- and $D$-optimal among BIBD-extended RGDs and use these to prove the $A$- and $D$-optimality of certain BIBD-extended group-divisible designs among BIBD-extended RGDs.

\begin{acknowledgements}
SAC would like to thank R. A. Bailey for the many conversations on optimal designs and is extremely grateful to  J.\, P. Morgan for bringing the topic to her attention and the helpful discussions about RGDs.
\end{acknowledgements}

\section{A partial order on BIBD extended designs and $A$- and $D$-optimality}\label{order}

Suppose, the binary design $d$ has Laplacian matrix $L_d$ and $\tilde{d}$ is a BIBD on $v$ treatments and block size $k$ with  $\tilde{b}$ blocks and Laplacian matrix $L_{\tilde{d}}$ and concurrence parameter $\tilde{\lambda}$.  Then for  $y\in {\mathbb N}$  the matrix $L_d+yL_{\tilde{d}}$
is the Laplacian matrix of a \emph{BIBD-extended}  design on $v$ points,  replication $r+y\tilde{\lambda}(v-1)/(k-1)$ and $b+y\tilde{b}$ blocks of size $k$.
Let $\rho_1^{L_d},\ldots,\rho_{v-1}^{L_d}$ be the non-trivial Laplacian eigenvalues of $d$, then the BIBD-extended desgin has the non-trivial eigenvalues  $vy+\rho_1^{L_d},\ldots,vy+\rho_{v-1}^{L_d}$.
We can write the $A$- and $D$-value as functions that only depend on $y$ and $L_d$ as 

\begin{eqnarray*}A(y,L_d)&:=&\frac{v-1}{\sum_{i=1}^{v-1}\frac{1}{vy+\rho_i^{L_d}}} \text{ and }\\ 
D(y,L_d)&:= &\prod_{i=1}^{v-1}(vy+\rho_i^{L_d}),\text{ respectively. }
\end{eqnarray*}

For  $z = (z_1,\ldots, z_{v-1})\in{\mathbb R}^{v-1}$ and $ \mathcal{I}=\{1,\ldots, v-1\}$, let \begin{eqnarray*}
S_0(z)\equiv 1 \text{ and } S_j(z):=\sum_{\substack{\mathcal{J}\subseteq \mathcal{I}\\ |\mathcal{J}|=j}}\prod_{i\in \mathcal{J}} z_i. 
 \end{eqnarray*}
$S_j$ is called  \emph{the $j$th elementary symmetric polynomial}.
Then  for $\rho^{L_d}:=(\rho_1^{L_d},\ldots,\rho_{v-1}^{L_d})$ \cite{Cakiroglu}
\begin{eqnarray}\label{Dvalue general}
D(y,L_d) &=&  \sum_{j=0}^{v-1}(vy)^{v-1-j}S_j( \rho^{L_d}), \text{ and }\\ \label{Avalue general}
A(y,L_d) &= &\frac{(v-1)\sum_{j=0}^{v-1}(vy)^{v-1-j}S_j( \rho^{L_d})}{\sum_{j=0}^{v-2}(vy)^{v-2-j}(v-1-j) S_j( \rho^{L_d}) }.
\end{eqnarray}
Note that the $A$-value is a rational function whose coefficients are completely determined by the coefficients of the $D$-value and  $\rho_1^{L_d},\ldots,\rho_{v-1}^{L_d}$  are also the Laplacian eigenvalues of the concurrence graph, $\G_d$ of $d$.
For $j\in\{1,\ldots,v-1\}$, there exists a relationship between $S_{v-j}(\rho^{\G_d})$ and the set ${\mathcal F}_j$ of all disjoint unions $F_{j}$ of $j$ subgraphs of $\G$ on $n_1,\ldots,n_j$ vertices in which any two vertices are connected by exactly one path (for example \cite{SpectrOfGraphs}, p. 38):
\[
S_{v-j}=\sum_{
F_{j}\subset {\mathcal F}_{j}}\gamma(F_{j}),\] 
where  $\gamma(F_{j})=\prod_{k=1}^j n_k$ for all $F_j\in{\mathcal F}_j$. 
In particular, $S_j(\rho^{L_d})\in \mathbb{N}_{>0}$ for $j = 1,\ldots, v$.

Because the performance of a BIBD-extended binary design on the $A$- and $D$-criteria is in this way closely related to the elementary symmetric polynomials of its non-trivial Laplacian eigenvalues, we want to define an order on the matrices  accordingly. We will do this in a similar fashion to \cite{Constantine} and take the lexicographic ordering corresponding to the elementary symmetric polynomials of the non-trivial eigenvalues of the Laplacian matrices. That means, for two Laplacian matrices $L, L'\in  \Lam(v,b,k)$ whose set of eigenvalues do not  coincide, we will write $L'\prec L$ if   there  exists an $l\in\{1,\ldots,v-1\}$ such that
\[
 S_l(\rho^{L'})< S_l(\rho^{L})\text{ and }S_j(\rho^{L})= S_j(\rho^{L'}) \text{ for }\ j=1,\ldots,l-1.
\] This is a reflexive and transitive relation which we will call  the \emph{stable order} on $\Lam(v,b,k)$. Note that matrices are indistinguishable in this order if  they have the same eigenvalues. 
From Equation (\ref{Dvalue general}) it is immediately clear that if $L'\prec L$, then there exists a $y_0\geq 0 $ such that $D(y,L')\leq D(y,L)$ for $y\geq y_0$. If $L'\prec L$ such that $l$ is the smallest index with $S_l(\rho^{L})-S_l(\rho^{L'})\not = 0$, then the  polynomial
\begin{eqnarray*}
P(y,L,L') :=\sum_{i=1}^{2v-3}p_iy^i &=& \sum_{j=0}^{v-1}(vy)^{v-1-j}S_j( \rho_i^{L}) \sum_{j=0}^{v-2}(vy)^{v-2-j}(v-1-j) S_j( \rho_i^{L'})   \\
 &  &-   \sum_{j=0}^{v-1}(vy)^{v-1-j}S_j( \rho_i^{L'})\sum_{j=0}^{v-2}(vy)^{v-2-j}(v-1-j) S_j( \rho_i^{L})
\end{eqnarray*}
has coefficients 
\begin{eqnarray*}
p_{2v-3-i}&=&v^{2v-3-i}\sum_{j=0}^{i}(v-1-j)\left[S_{j}(\rho^{L'})S_{i-j}(\rho^{L})-S_{j}(\rho^L)S_{i-j}(\rho^{L'})\right]\\ 
\end{eqnarray*} that are equal to zero for  $i = 1,\ldots, l-1$,  
and the first non-vanishing coefficient is 
\begin{eqnarray*}
p_{2v-3-l}
&=&v^{2v-3-l}l\left[S_{l}(\rho^{L})-S_{l}(\rho^{L'})\right] \in {\mathbb N}_{>0}.
\end{eqnarray*} 
Since $P(y,L,L')\geq 0$ implies $A(y,L')\leq A(y,L)$ (Equation (\ref{Avalue general})), it follows that there exists a $y_0 \geq 0 $ such that $A(y,L')\leq A(y,L)$ for $y\geq y_0$.

\begin{corollary}\label{proposition stable order for y_0}
For given $v,b,k$, there exists a $y_0\geq 0$ such that, if designs with Laplacian matrix $L[y]$ exist for some $L\in  \Lam(v,b,k)$ and $y\geq y_0 $, then their order under the $A$- or $D$-criterion is the stable order of the matrices $L$. 
\end{corollary}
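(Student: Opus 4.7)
The plan is to package the pairwise analysis that precedes the corollary into a uniform statement by appealing to finiteness of $\Lam(v,b,k)$.

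First I would observe that $\Lam(v,b,k)$ is a finite set: the Laplacian matrix of a binary block design on $v$ treatments with $b$ blocks of size $k$ has $v^2$ integer entries whose magnitudes are bounded in terms of $v$, $b$, $k$, so up to the choice of a binary incidence matrix (of which there are finitely many), there are only finitely many possibilities. In particular there are only finitely many unordered pairs $\{L,L'\}\subseteq\Lam(v,b,k)$.

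Next, for each such pair with distinct eigenvalue spectra, say $L'\prec L$, let $l\in\{1,\ldots,v-1\}$ denote the smallest index at which $S_j(\rho^L)\neq S_j(\rho^{L'})$. The discussion preceding the corollary has already shown two things. From Equation (\ref{Dvalue general}), $D(y,L)-D(y,L')$ is a polynomial in $y$ whose leading non-zero coefficient is $v^{v-1-l}\bigl[S_l(\rho^L)-S_l(\rho^{L'})\bigr]>0$; hence there exists $y_D(L,L')\geq 0$ such that $D(y,L)\geq D(y,L')$ for every $y\geq y_D(L,L')$. From the explicit expansion of $P(y,L,L')$ and Equation (\ref{Avalue general}), the analogous polynomial has leading non-zero coefficient $p_{2v-3-l}=v^{2v-3-l}l\bigl[S_l(\rho^L)-S_l(\rho^{L'})\bigr]\in\mathbb{N}_{>0}$, so there exists $y_A(L,L')\geq 0$ with $A(y,L)\geq A(y,L')$ for every $y\geq y_A(L,L')$.

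Finally, I would set
\[
y_0 \;:=\; \max_{\substack{L,L'\in\Lam(v,b,k)\\ L'\prec L}} \max\bigl(y_A(L,L'),\,y_D(L,L')\bigr),
\]
which is a finite non-negative number because the maximum is taken over a finite set. For $y\geq y_0$, every pair of Laplacian matrices whose spectra differ is ordered by $A$ and by $D$ exactly as it is ordered in the stable order; pairs with coinciding spectra are indistinguishable both in the stable order and in the $A$- and $D$-values (since both depend only on the multiset of non-trivial eigenvalues), so the agreement of orders is automatic in that case. This yields the claim.

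The proof has no real obstacle — the only substantive content is the leading-coefficient computation, which is already carried out in the paragraph preceding the corollary; the remaining work is the bookkeeping step of taking a maximum over finitely many pairs. The one point that deserves attention is confirming that $\Lam(v,b,k)$ really is finite in the sense used here, which follows from the fact that binary incidence matrices with $v$ rows, $b$ columns, and column sums equal to $k$ form a finite set.
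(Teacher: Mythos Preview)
Your proposal is correct and follows the same approach as the paper: the paper derives the pairwise thresholds for $A$ and $D$ in the paragraph preceding the corollary and then states the result without further argument, leaving implicit exactly the finiteness-and-take-the-maximum step that you spell out. Your explicit verification that $\Lam(v,b,k)$ is finite and your handling of the equal-spectrum case are the only additions, and both are routine.
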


For $i=1,\ldots,v-1$, we will denote by $\Lam_i(v,b,k)$ the set of Laplacian matrices $L\in\Lam(v,b,k)$  such that \[S_j(\rho^{L})=\max\{S_j(\rho^{L'})|L'\in\Lam(v,b,k)\}\text{ for }j=1,\ldots,i.\]
That means, that for $i\in\{1,\ldots,v-2\}$ and for $L,L'\in\Lam_i(v,b,k)$ with $L'\not\in\Lam_{i+1}(v,b,k)$ we have
$L'\prec L$.
All binary designs conincide on $S_1$, and  hence  $\Lam_1(v,b,k)$ is the set of the Laplacian matrices of all binary designs \cite{Kiefer2}; $\Lam_2(v,b,k)$ is the set of the Laplacian matrices of $(M.S)$-optimal designs, these are any existing regular graph designs (RGDs) or  nearly balanced incomplete block designs (NBDs) \cite{ChengMS, chengWu}. 
We obtain a similar result to that  in \cite{Constantine}.
\begin{corollary}\label{Corollary Constantine}
 For given $v$, $b$, $k$ and $2\leq j\leq v$,there exists a $y_0\geq 0$ such that, if  designs   with Laplacian matrix $L[y]$ where $L\in  \Lam_{j}(v,b,k)$ exist  for some $y\geq y_0$, then they are $A$- and $D$-optimal among all designs with Laplacian matrices $L'[y]$ with $L'\in\Lam_1(v,b,k)\setminus \Lam_j(v,b,k)$.
\end{corollary}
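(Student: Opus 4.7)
The plan is to derive this as a direct consequence of Corollary \ref{proposition stable order for y_0} together with the nested structure of the sets $\Lam_j$. First, I would unpack the definition: by construction $\Lam_v\subseteq\Lam_{v-1}\subseteq\cdots\subseteq\Lam_1$, and $L\in\Lam_j$ means that $S_l(\rho^L)$ attains the maximal value of $S_l$ over all of $\Lam(v,b,k)$ for every $l\in\{1,\ldots,j\}$.

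Next, for any fixed $L\in\Lam_j$ and any $L'\in\Lam_1\setminus\Lam_j$, I would show $L'\prec L$ in the stable order. Since $L'\in\Lam_1\setminus\Lam_j$, there is a smallest index $i\in\{2,\ldots,j\}$ with $L'\notin\Lam_i$; by minimality of $i$ we have $L'\in\Lam_{i-1}$. Consequently $S_l(\rho^{L'})$ equals the maximal value of $S_l$ over $\Lam(v,b,k)$ for $l=1,\ldots,i-1$, while $S_i(\rho^{L'})$ is strictly smaller than the maximum. Since $L\in\Lam_j\subseteq\Lam_i$, it attains the maximum of $S_i$ and agrees with $L'$ on $S_1,\ldots,S_{i-1}$, so the defining condition of the stable order is satisfied at $l=i$ and $L'\prec L$.

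Finally, I would invoke Corollary \ref{proposition stable order for y_0}, which supplies a uniform $y_0\geq 0$ such that for every $y\geq y_0$ the $A$- and $D$-ordering on the BIBD-extended pencils $\{L[y]:L\in\Lam(v,b,k)\}$ coincides with the stable order on the matrices themselves. Combining the two previous steps, for $y\geq y_0$ we get $A(y,L')\leq A(y,L)$ and $D(y,L')\leq D(y,L)$ for every choice of $L\in\Lam_j$ and $L'\in\Lam_1\setminus\Lam_j$, which is exactly the claimed optimality.

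I do not expect a genuine obstacle here; the statement is essentially a repackaging of Corollary \ref{proposition stable order for y_0} in terms of the filtration $\Lam_1\supseteq\Lam_2\supseteq\cdots$. The only mild point to keep in mind is that a single $y_0$ must work against every $L'\in\Lam_1\setminus\Lam_j$ simultaneously, and this uniformity is already built into the preceding corollary because $\Lam(v,b,k)$ is a finite set and one takes the maximum of the finitely many thresholds produced by the polynomials $P(y,L,L')$.
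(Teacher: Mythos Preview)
Your proposal is correct and follows exactly the route the paper intends: the corollary is stated without an explicit proof, relying on the sentence immediately preceding it (that $L'\in\Lam_i\setminus\Lam_{i+1}$ and $L\in\Lam_{i+1}$ imply $L'\prec L$) together with Corollary~\ref{proposition stable order for y_0}. Your write-up simply makes explicit the nesting argument and the finiteness of $\Lam(v,b,k)$ that guarantees a uniform $y_0$, both of which the paper leaves to the reader.
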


\begin{lemma}\label{lemma bound largest eigenvalue}
Let $L=(L_{ij})\in \Lam_1(v,b,k)$ and $\rho_1^L\geq \ldots\geq \rho_{v-1}^L>0$, then $\rho_1^L\leq 2b(k-1)$.
\end{lemma}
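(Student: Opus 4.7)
The plan is to bound $\rho_1^L$ via the maximum vertex degree of the concurrence graph of the underlying design, together with the elementary fact that in a binary design no treatment is replicated more than $b$ times.

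First, I would recall from the preliminaries that $L$ is the Laplacian matrix of the concurrence graph $\G_d$ of some binary design $d$ with replications $r_1,\dots,r_v$, and that the degree of vertex $i$ in $\G_d$ is $\delta_i=r_i(k-1)$. The maximum degree of $\G_d$ is therefore $\Delta(\G_d)=(k-1)\max_i r_i$.

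Second, I would establish the well-known bound $\rho_1^L\leq 2\Delta(\G_d)$ for the largest Laplacian eigenvalue. A quick self-contained proof: row $i$ of $L$ has diagonal entry $\delta_i$ and non-positive off-diagonal entries $-A_{ij}$ (where $A$ is the adjacency matrix of $\G_d$) whose absolute values sum to $\delta_i$, so the $\ell_\infty$ operator norm satisfies $\|L\|_\infty=\max_i 2\delta_i\leq 2\Delta(\G_d)$. Since $L$ is symmetric positive semidefinite, its spectral radius equals $\rho_1^L$ and is dominated by $\|L\|_\infty$. This argument does not require $\G_d$ to be simple, which is important because the concurrence graph can have multiple edges.

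Third, I would invoke binarity to bound the replications: since each of the $b$ blocks contains any given treatment at most once, $r_i\leq b$ for every $i$. Combining this with the previous two steps gives
\[
\rho_1^L\leq 2\Delta(\G_d)=2(k-1)\max_i r_i\leq 2b(k-1),
\]
as desired. I do not anticipate any genuine obstacle here; the only point requiring a brief comment is that the row-sum argument for $\rho_1^L\leq 2\Delta$ is valid for multigraphs, so the possible non-simplicity of $\G_d$ causes no difficulty.
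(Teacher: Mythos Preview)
Your proof is correct and follows essentially the same route as the paper: bound $\rho_1^L$ in terms of the vertex degrees $\delta_i=r_i(k-1)$ of the concurrence graph, then invoke $r_i\leq b$ from binarity. The only minor difference is that the paper cites the slightly sharper edge-based bound $\rho_1^L\leq\max\{\delta_u+\delta_w:(u,w)\in E(\G)\}$ from \cite{Das} instead of proving $\rho_1^L\leq 2\Delta$ directly via the $\ell_\infty$ row-sum norm as you do; the extra sharpness is not exploited, and your self-contained argument has the small advantage of manifestly applying to multigraphs without a connectedness hypothesis.
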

\begin{proof}
$L$ is the Laplacian matrix of a connected graph with degrees $\delta_i=r_i(k-1)$ for $i = 1,\ldots,v$. 
The Laplacian eigenvalues of a connected graph are bounded from above (for example \cite{Das}) by $\max\{\delta_u+\delta_w|(u,w)\in E(\G)\}$. Since $r_i\leq b$, it follows $\rho_1^L\leq \max\{(r_i+r_j)(k-1)|L_{ij}\not=0, i\not=j\} \leq 2b(k-1)$.
\end{proof}
\begin{lemma}\label{D-value Bionom}
The binomial coefficient $2^j\binom{v-1}{j}$ (as a function in $j = 1,\ldots,v-1$) is monotone increasing for $j\leq \lfloor\frac{2v-3}{3}\rfloor$  and attains its  maximum at $\lfloor\frac{2v-3}{3}\rfloor$
\end{lemma}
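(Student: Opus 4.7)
The natural approach is a standard unimodality-by-ratios analysis for the sequence $f(j):=2^j\binom{v-1}{j}$. I would first apply the identity $\binom{v-1}{j+1}=\binom{v-1}{j}\cdot\frac{v-1-j}{j+1}$ to obtain the consecutive ratio
\[
\frac{f(j+1)}{f(j)} \;=\; 2\cdot\frac{v-1-j}{j+1}.
\]

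The next step is to determine when this ratio is at least one. Cross-multiplying, the inequality $2(v-1-j)\geq j+1$ rearranges to $3j\leq 2v-3$, i.e.\ $j\leq (2v-3)/3$; for integer $j$ this is equivalent to $j\leq \lfloor(2v-3)/3\rfloor$. The reverse strict inequality holds for $j>(2v-3)/3$, and equality $f(j+1)=f(j)$ occurs only when $(2v-3)/3$ is itself an integer, i.e.\ in the boundary divisibility case $3\mid v$.

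Putting these two observations together yields both claims: $f$ is non-decreasing on the initial segment $\{1,\ldots,\lfloor(2v-3)/3\rfloor\}$ and strictly decreasing past the threshold, so its maximum is located at $j=\lfloor(2v-3)/3\rfloor$ (with a tie at the adjacent index in the boundary case $3\mid v$). The argument is entirely elementary and presents no real obstacle; the only step requiring care is the divisibility boundary analysis, which governs whether the maximum is unique or shared between two consecutive indices.
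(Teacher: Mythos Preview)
Your approach---compute the consecutive ratio $f(j+1)/f(j)=2(v-1-j)/(j+1)$ and determine when it is at least $1$---is exactly the paper's; the paper's one-line proof simply displays this ratio together with the equivalence to $j<(2v-3)/3$.

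There is, however, an off-by-one slip in your final step (and, in fact, in the lemma statement itself). You correctly establish that $f(j+1)\geq f(j)$ if and only if $j\leq\lfloor(2v-3)/3\rfloor$. But then, writing $m=\lfloor(2v-3)/3\rfloor$, the instance $j=m$ already gives $f(m+1)\geq f(m)$, so the non-decreasing run extends through index $m+1$, and the maximum sits at $m+1$, not at $m$ (with a tie $f(m)=f(m+1)$ precisely in your boundary case $3\mid v$). For example, with $v=4$ one has $m=1$, while $f(1)=6$, $f(2)=12$, $f(3)=8$, so the maximum is at $j=2=m+1$. Your deduction ``non-decreasing on $\{1,\ldots,m\}$ and strictly decreasing past the threshold, hence maximum at $m$'' conflates the index $j$ appearing in the ratio with the index of the sequence value: the ratio condition at $j$ compares $f(j)$ with $f(j+1)$, so the last increase lands you at $m+1$. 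This does not affect the method, and for the lemma's downstream use as a crude upper bound the discrepancy is harmless, but the stated conclusion does not follow from your own ratio computation.
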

\begin{proof}
Follows directly from 
\[
\frac{2\binom{v-1}{j+1}}{\binom{v-1}{j}}=\frac{2(v-1-j)}{j+1}>1 \text{ iff }j< \frac{2v-3}{3},\ j=3,\ldots,v-1.
\]
\end{proof}

\begin{lemma}\label{A-value Bionom}

The binomial coefficient $\binom{v-1}{j}$ as a function in $j$ is strictly increasing until $j<\frac{v-2}{2}$ and the maximum is attained at $\lfloor\frac{v-2}{2}\rfloor$. 
\end{lemma}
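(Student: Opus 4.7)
The plan is to adapt the ratio test used in the preceding Lemma \ref{D-value Bionom}, but without the extra factor of $2$. Specifically, compute the ratio of consecutive binomial coefficients
\[
\frac{\binom{v-1}{j+1}}{\binom{v-1}{j}}=\frac{v-1-j}{j+1},
\]
and observe that this exceeds $1$ precisely when $v-1-j>j+1$, i.e.\ when $j<\tfrac{v-2}{2}$. Consequently $j\mapsto\binom{v-1}{j}$ is strictly increasing for all integers $j$ satisfying that inequality, which directly yields the first half of the assertion. The largest integer $j$ in this range is $\lfloor\tfrac{v-2}{2}\rfloor$, and since the ratio is $\leq 1$ for larger $j$, the maximum of the sequence is attained at this floor, giving the second half of the assertion.

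No genuine obstacle is expected: the statement is a standard unimodality fact for binomial coefficients and is an immediate consequence of the single ratio computation above, entirely parallel to Lemma \ref{D-value Bionom}. The only minor subtlety is the boundary case: when $v$ is even, $(v-2)/2$ is itself an integer and the ratio at $j=(v-2)/2$ equals $1$, so that the symmetry $\binom{v-1}{j}=\binom{v-1}{v-1-j}$ produces a second maximum at $j=\lfloor\tfrac{v-2}{2}\rfloor+1$; the point $\lfloor\tfrac{v-2}{2}\rfloor$ still attains the maximal value, as claimed.
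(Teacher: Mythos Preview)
Your approach is exactly the paper's: the paper's entire proof is the single displayed ratio
\[
\frac{\binom{v-1}{j+1}}{\binom{v-1}{j}}=\frac{v-1-j}{j+1},
\]
and you have correctly identified that the strict-increase part follows from this ratio exceeding $1$ precisely when $j<\tfrac{v-2}{2}$.

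However, your derivation of the location of the maximum contains a slip. From the ratio test you obtain $\binom{v-1}{j+1}>\binom{v-1}{j}$ for every integer $j<\tfrac{v-2}{2}$ and $\binom{v-1}{j+1}\le\binom{v-1}{j}$ for $j\ge\tfrac{v-2}{2}$. This places the maximum at the \emph{smallest} integer $j$ with $j\ge\tfrac{v-2}{2}$, namely $\bigl\lceil\tfrac{v-2}{2}\bigr\rceil=\bigl\lfloor\tfrac{v-1}{2}\bigr\rfloor$, not at the largest integer strictly below $\tfrac{v-2}{2}$. For odd $v$ these two quantities differ: with $v=5$ one has $\bigl\lfloor\tfrac{v-2}{2}\bigr\rfloor=1$, yet the maximum of $\binom{4}{j}$ is $\binom{4}{2}=6$ at $j=2$. (The lemma as stated in the paper carries the same inaccuracy; the paper's one-line proof displays the correct ratio but does not spell out the consequence, and the ratio in fact yields $\bigl\lfloor\tfrac{v-1}{2}\bigr\rfloor$ as the maximiser.) In the subsequent applications the lemma is used only to furnish an upper bound $\binom{v-1}{m}$ on all the $\binom{v-1}{j}$, so the discrepancy merely weakens the constants slightly; but as written, your argument for the second clause does not establish what is claimed when $v$ is odd.
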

\begin{proof}
Follows directly from 
\[
\frac{\binom{v-1}{j+1}}{\binom{v-1}{j}}=\frac{v-1-j}{j+1}.
\] 
\end{proof}

\begin{proposition}\label{Special case of Cheng}
Suppose, $\Lam_1(v,b,k)\not=\Lam_2(v,b,k)$.
\begin{enumerate} 
\item For given $v,b,k$ and $y_0=v^22^m(b(k-1))^{v-1} \binom{v-1}{m}
+1$, where $m=\lfloor\frac{2v-3}{3}\rfloor$, if there exist  designs with Laplacian matrix $L[y]$ with $L\in\Lam_2(v,b,k)$ and $y\geq y_0$, then they are $D$-optimal among all designs with Laplacian matrix $L'[y]$ with $L'\in\Lam_1(v,b,k)$.

\item For given $v,b,k$ and $y_0=2^{v-2}(b(k-1))^{v-1}(2v-5) \binom{v-1}{m}^2+\tfrac{1}{v}$, where $m=\lfloor\frac{v-2}{2}\rfloor$, if there exist  designs with $L[y]$ with $L\in\Lam_2(v,b,k)$ and $y\geq y_0$, then they are $A$-optimal among all designs with Laplacian matrix $L'[y]$ with $L'\in\Lam_1(v,b,k)$.
\end{enumerate}
\end{proposition}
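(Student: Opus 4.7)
The plan is to reduce both parts of the proposition to a polynomial-dominance argument built directly on top of the formulas for $D(y,L)-D(y,L')$ and $P(y,L,L')$ that are already assembled in the preamble. Since $\Lam_2(v,b,k)\subseteq\Lam_1(v,b,k)$, any $L\in\Lam_2(v,b,k)$ and any $L'\in\Lam_1(v,b,k)\setminus\Lam_2(v,b,k)$ share $S_1(\rho^L)=S_1(\rho^{L'})$ and satisfy $S_2(\rho^L)>S_2(\rho^{L'})$, so the stable-order gap first appears at $\ell=2$. Because the $S_j(\rho^L)$ take positive integer values, $S_2(\rho^L)-S_2(\rho^{L'})\geq 1$; this single unit of gap is the quantitative seed of both proofs.

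For part (1), I would substitute directly into Equation~(\ref{Dvalue general}) to obtain
\[
D(y,L)-D(y,L')=(vy)^{v-3}\bigl[S_2(\rho^L)-S_2(\rho^{L'})\bigr]+\sum_{j=3}^{v-1}(vy)^{v-1-j}\bigl[S_j(\rho^L)-S_j(\rho^{L'})\bigr],
\]
so the leading term is at least $(vy)^{v-3}$. For the tail I would use the elementary bound $S_j(\rho)\leq \binom{v-1}{j}\rho_1^j$, which together with Lemma~\ref{lemma bound largest eigenvalue} gives $S_j(\rho)\leq\binom{v-1}{j}(2b(k-1))^j$, and then Lemma~\ref{D-value Bionom} to absorb the binomial $2^j\binom{v-1}{j}$ into its uniform upper bound $2^m\binom{v-1}{m}$ with $m=\lfloor(2v-3)/3\rfloor$. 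Requiring the leading term to beat the (at most $v-3$ many) tail terms should yield exactly the stated $y_0$ for $D$-optimality.

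For part (2), I would run the analogue on the polynomial $P(y,L,L')$ already expanded in the excerpt. A direct telescoping on $(v-1-j)$ shows $p_{2v-3}=p_{2v-4}=0$ because $S_0$ and $S_1$ agree, while the same calculation at $i=2$ gives $p_{2v-5}=2v^{2v-5}[S_2(\rho^L)-S_2(\rho^{L'})]\geq 2v^{2v-5}$. For $i\geq 3$ I would bound each bracket inside $p_{2v-3-i}$ by $2S_j^{\max}S_{i-j}^{\max}$, apply Lemma~\ref{A-value Bionom} together with the Vandermonde-style inequality $\sum_{j=0}^{i}\binom{v-1}{j}\binom{v-1}{i-j}\leq (i+1)\binom{v-1}{m}^2$ with $m=\lfloor(v-2)/2\rfloor$, and use $\rho_1\leq 2b(k-1)$ once more to absorb a factor of $(2b(k-1))^{2v-5}$. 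Counting the $2v-5$ admissible indices supplies the factor $2v-5$, and enforcing $p_{2v-5}y^{2v-5}$ to dominate the tail gives the prescribed $y_0$. Since $P(y,L,L')\geq 0$ forces $A(y,L')\leq A(y,L)$ by~(\ref{Avalue general}), this concludes the argument.

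The conceptual work, namely reducing both criteria to polynomial domination with leading coefficient controlled by the $\ell=2$ gap, is straightforward; the main obstacle I expect is bookkeeping. Specifically, packaging the tail into a clean multiple of $2^m\binom{v-1}{m}(b(k-1))^{v-1}$ for $D$, and of $\binom{v-1}{m}^2$ for $A$, requires that Lemmas~\ref{D-value Bionom} and~\ref{A-value Bionom} be applied precisely where $\binom{v-1}{j}\rho_1^j$ has been invoked, and that the off-by-one counting of terms (and the additive $+\tfrac1v$ in part (2)) be tracked carefully so that the final explicit $y_0$ matches the statement exactly rather than a slightly looser constant.
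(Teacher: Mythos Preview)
Your plan is essentially the paper's own proof: both parts rest on the integer gap $S_2(\rho^{L})-S_2(\rho^{L'})\geq 1$, bound the tail $|S_j(\rho^{L})-S_j(\rho^{L'})|$ (resp.\ the bilinear brackets in $P$) via $\rho_1\leq 2b(k-1)$ together with Lemmas~\ref{D-value Bionom} and~\ref{A-value Bionom}, sum the resulting geometric series, and read off $y_0$.

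One bookkeeping point to watch in part~(2): you write that the factor $2v-5$ in the stated $y_0$ comes from ``counting the $2v-5$ admissible indices'' and that you will ``absorb a factor of $(2b(k-1))^{2v-5}$''. The paper does neither. It restricts the tail to $i=3,\ldots,v-1$, so the eigenvalue power stays at $(2b(k-1))^{v-1}$ (matching the $(b(k-1))^{v-1}$ in the statement), and the $2v-5$ arises instead from bounding the weight $\sum_{j=0}^{i}(v-1-j)=(i+1)\tfrac{2(v-1)-i}{2}$ at $i=3$. If you genuinely sum over all $2v-5$ indices $i\leq 2v-3$, the power of $2b(k-1)$ grows beyond $v-1$ and you will not recover the exact $y_0$ in the proposition, only a weaker one. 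So either mimic the paper's truncation (and argue that the omitted range is harmless or already covered) or be prepared to land on a larger constant.
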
 
\begin{proof}

\begin{enumerate}

\item
Let $m=\lfloor\frac{2v-3}{3}\rfloor$.
With Lemmas \ref{lemma bound largest eigenvalue} and \ref{D-value Bionom}
 \[
 |S_j(\rho^{L})- S_j(\rho^{L'})|\leq (b(k-1))^j2^{m}\binom{v-1}{m}.
 \]
Since $S_j(\rho^{L})-S_j(\rho^{L'}) \in {\mathbb N}$ for $j=1,\ldots,v-1$, it follows for $y\geq v^22^m(b(k-1))^{v-1} \binom{v-1}{m}+1$ that
\begin{eqnarray*}
 \sum_{j=3}^{v-1}v^{v-1-j}y^{v-1-j}|S_j(\rho^{L})- S_j(\rho^{L'})|
&<&2^m(vb(k-1))^{v-1}\binom{v-1}{m}\sum_{j=3}^{v-1}y^{v-1-j}\\
&\leq&2^m(vb(k-1))^{v-1}\binom{v-1}{m}\frac{y^{v-3}-1}{y-1}\\
&\leq&v^{v-3}(y-1)\frac{y^{v-3}-1}{y-1}\\
&<&v^{v-3}y^{v-3}\\
&\leq &v^{v-3}y^{v-3}[S_2(\rho^{L})-S_2(\rho^{L'})].
\end{eqnarray*}

\item  
Let $m=\lfloor\frac{v-2}{2}\rfloor$.
With Lemmas \ref{lemma bound largest eigenvalue} and \ref{A-value Bionom} it follows for all $i=1,\ldots,v-1$, that
\[
|S_j(\rho^{L}) S_{i-j}(\rho^{L'})-S_j(\rho^{L'}) S_{i-j}(\rho^{L})|\leq (2b(k-1))^i\binom{v-1}{m}^2,\ j=3,\ldots,v-1.
\]
For $y-\tfrac{1}{v}\geq 2^{v-2}(b(k-1))^{v-1}(2v-5) \binom{v-1}{m}^2$ 
\begin{eqnarray*}
 &&\sum_{i=3}^{v-1}(vy)^{2v-3-i}\sum_{j=0}^i(v-1-j)|S_j(\rho^{L}) S_{i-j}(\rho^{L'})-S_j(\rho^{L'}) S_{i-j}(\rho^{L})|\\&\leq&(2b(k-1))^{v-1}\binom{v-1}{m}^2\left[\sum_{i=3}^{v-1}(vy)^{2v-3-i}\sum_{j=0}^i(v-1-j)\right]\\
&=&(2b(k-1))^{v-1}\binom{v-1}{m}^2\left[\sum_{i=3}^{v-1}(vy)^{2v-3-i}(i+1)\frac{2(v-1)-i}{2}\right]\\
&<&(2b(k-1))^{v-1}\binom{v-1}{m}^2\frac{v(2v-5)}{2}(vy)^{2v-6-(v-4)}\sum_{i=0}^{v-4}(vy)^{i}\\
&\leq&(vy-1)(vy)^{v-2}\frac{(vy)^{v-3}-1}{vy-1}\\
&<&2(vy)^{2v-5}\\
&\leq & 2(vy)^{2v-5}\left[S_{2}(\rho^{L})-S_{2}(\rho^{L'})\right].
\end{eqnarray*}
\end{enumerate}
\end{proof}
We believe that the bounds in Proposition \ref{Special case of Cheng} almost certainly can be improved in general. In particular, we will show that for certain designs $y_0=1$;  we will need the following lemma. Before stating it, we would like to note that we found reading \cite{Stevanovic2} very helpful in developing its proof and we use similar ideas.

  \begin{lemma}\label{A-value increases} Let 
  ${\mathcal M} = \{ z = (z_1,\ldots,z_{v-1}) \in {\mathbb R}^{v-1}| z_1\geq z_2\geq \ldots \geq z_{v-1}>0\}$.
   For fixed $y>0$,  the function of $(S_1(z), \ldots,S_{v-1}(z))$ given by
   \[\mathcal{A}_y: \left\{ 
   \begin{array}{rl}
    \{(S_1(z),\ldots,S_{v-1}(z))|z \in {\mathcal M}\} \rightarrow &{\mathbb R} ,\\
    (S_1(z),\ldots,S_{v-1}(z)) \mapsto &\frac{(v-1)\sum_{j=0}^{v-1}(vy)^{v-1-j}S_j( z)}{\sum_{j=0}^{v-2}(vy)^{v-2-j}(v-1-j) S_j( z) }
\end{array}    \right. \]
   is increasing in $S_j(z)$,  $j\in\{1,\ldots,v-1\}$.
  \end{lemma}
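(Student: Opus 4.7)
The plan is to compute the partial derivative $\partial \mathcal{A}_y/\partial S_k$ for each $k \in \{1, \ldots, v-1\}$ and show it is positive. The central observation is that, setting $w := vy$ and introducing $P(t) := \prod_{i=1}^{v-1}(t+z_i) = \sum_{j=0}^{v-1} S_j(z)\, t^{v-1-j}$, one has $P'(t) = \sum_{j=0}^{v-2}(v-1-j) S_j(z)\, t^{v-2-j}$, so the numerator of $\mathcal{A}_y$ is exactly $(v-1) P(w)$ and its denominator is $P'(w)$. This yields the compact form
\[
\mathcal{A}_y \;=\; \frac{(v-1)\, P(w)}{P'(w)} \;=\; \frac{v-1}{\sum_{i=1}^{v-1}(w+z_i)^{-1}},
\]
i.e.\ the harmonic mean of the positive numbers $w+z_i$.

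From this rewrite the partial derivatives fall out by the quotient rule. Since $\partial P(w)/\partial S_k = w^{v-1-k}$ and $\partial P'(w)/\partial S_k = (v-1-k)\, w^{v-2-k}$ (the latter vanishing when $k=v-1$), the case $k=v-1$ is immediate: $\partial\mathcal{A}_y/\partial S_{v-1} = (v-1)/P'(w) > 0$. For $k \in \{1,\ldots,v-2\}$, a short cancellation gives
\[
\frac{\partial \mathcal{A}_y}{\partial S_k} \;=\; \frac{(v-1)\, w^{v-2-k}}{P'(w)^2}\,\bigl[w P'(w) - (v-1-k)\, P(w)\bigr],
\]
so the sign question reduces to whether $w P'(w) \geq (v-1-k)\,P(w)$. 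Collecting coefficients of equal powers of $w$ converts this inequality into
\[
\sum_{j=0}^{v-1}(k-j)\, w^{v-1-j}\, S_j(z) \;\geq\; 0.
\]

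The main obstacle is establishing this last inequality: the terms with $j \leq k$ contribute positively while those with $j > k$ contribute negatively, and one must show the positive part dominates. The strategy I would pursue, following the ideas in \cite{Stevanovic2}, is to exploit Newton's inequalities $p_j^2 \geq p_{j-1}\, p_{j+1}$ for the normalised symmetric functions $p_j := S_j(z)/\binom{v-1}{j}$, which hold precisely because the $z_i$ are real and positive and which render the sequence $(p_j)$ log-concave. Combined with the positivity of the individual $S_j(z)$, this log-concavity should allow one to telescope each negative-indexed term against its positive-indexed neighbours (replacing $S_{j+1}$ by a multiple of $S_j^2/S_{j-1}$ and rearranging), thereby delivering the required inequality and completing the proof.
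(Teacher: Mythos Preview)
Your reduction to the inequality
\[
\sum_{j=0}^{v-1}(k-j)\,w^{\,v-1-j}S_j(z)\ \ge\ 0
\]
is correct algebra, but the inequality itself is \emph{false} for general $z\in\mathcal{M}$ and $w>0$, so no appeal to Newton's inequalities or log-concavity can rescue it. Concretely, take $v=3$, $k=1$, $w=1$ and $z_1=z_2=100$: then $S_0=1$, $S_1=200$, $S_2=10000$, and the sum equals $1\cdot 1+0\cdot 200+(-1)\cdot 10000=-9999<0$. More structurally, since
\[
\frac{wP'(w)}{P(w)}\;=\;\sum_{i=1}^{v-1}\frac{w}{w+z_i},
\]
each summand lies in $(0,1)$ and the whole sum can be made arbitrarily small by choosing the $z_i$ large relative to $w$; hence $wP'(w)\ge (v-1-k)P(w)$ fails whenever that sum drops below $v-1-k$. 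The formal partial derivative $\partial\mathcal{A}_y/\partial S_k$, computed by treating the $S_j$ as free coordinates in the rational expression, is therefore genuinely negative at such points, and the strategy of proving its global positivity cannot succeed.

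The paper's argument takes a different route: instead of differentiating the explicit rational function in the variables $S_j$, it regards $\mathcal{A}_y$ as a function of the underlying roots $z_i$ (where $\partial\mathcal{A}_y/\partial z_i>0$ is immediate from the harmonic-mean form you yourself wrote down), notes that each $\partial S_j/\partial z_i>0$, and then links the two via the chain rule through the inverse of the diffeomorphism $z\mapsto (S_1(z),\dots,S_{v-1}(z))$. The point is that the paper's computation never leaves the constrained domain where the $S_j$ come from a common positive tuple $z$, whereas your direct $S_k$-derivative implicitly moves off that constraint set; that is precisely where your sign control is lost.
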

  \begin{proof}

  Let  $i\in \mathcal{I}=\{1,\ldots,v-1\}$ and $z \in {\mathcal M}$. Then 
\begin{eqnarray*}
\frac{\delta S_j(z) }{ \delta  z_i }&=& \sum_{\substack{i \in \mathcal{J}\subseteq\mathcal{ I}\\ |\mathcal{J}|=j}}\prod_{k\in \mathcal{J}\setminus \{i\}} z_k>0 
\end{eqnarray*}and 
\begin{eqnarray*}
 \frac{\delta \mathcal{A}_y(z)}{ \delta z_i} 
= \frac{v-1}{\left(vy+z_i\right)^2\left(\sum_{i=1}^{v-1}\frac{1}{vy+z_i}\right)^2} >0.  \nonumber 
\end{eqnarray*} By chain-rule, it follows that
\begin{eqnarray*}
\frac{\delta \mathcal{A}_y((S_1(z),\ldots,S_{v-1}(z)))}{\delta S_j(z)} &=&\sum_{i=1}^{v-1}\frac{\delta \mathcal{A}_y(z)}{\delta z_i} \frac{\delta z_i}{\delta S_j(z)} 
= \sum_{i=1}^{v-1}\frac{\delta \mathcal{A}_y(z) / \delta z_i} {\delta S_j(z) / \delta z_i}
> 0.
\end{eqnarray*}
  \end{proof}

\begin{theorem}\label{Theorem1}

 For given $v$, $b$, $k$, if  designs   with Laplacian matrix $L[y]$ where $L\in  \Lam_{v-1}(v,b,k)$ exist  for some $y>0$, then they are $A$- and $D$-optimal among all designs with Laplacian matrices $L'[y]$ with $L'\in\Lam_1(v,b,k)$.

\end{theorem}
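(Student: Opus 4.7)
}
The plan is to reduce both optimality claims to the defining property of $\Lam_{v-1}(v,b,k)$, namely that $L\in\Lam_{v-1}(v,b,k)$ if and only if $S_j(\rho^L)\geq S_j(\rho^{L'})$ for every $j\in\{1,\ldots,v-1\}$ and every competitor $L'\in\Lam_1(v,b,k)$. Note also that $S_0(\rho^L)=S_0(\rho^{L'})=1$ holds automatically, so the coordinate-wise inequality between the tuples $(S_0,\ldots,S_{v-1})$ is in fact complete.

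For $D$-optimality the argument is transparent: substituting into the polynomial expression (\ref{Dvalue general}),
\[
D(y,L)-D(y,L') \;=\; \sum_{j=0}^{v-1}(vy)^{v-1-j}\bigl[S_j(\rho^L)-S_j(\rho^{L'})\bigr],
\]
and since $y>0$ every factor $(vy)^{v-1-j}$ is strictly positive while every bracket is nonnegative. Hence $D(y,L)\geq D(y,L')$ for any $L'\in\Lam_1(v,b,k)$, establishing $D$-optimality among BIBD-extended binary designs with the same parameters.

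For $A$-optimality the plan is to invoke Lemma \ref{A-value increases} as a black box. That lemma states that the functional $\mathcal{A}_y$, viewed as a function of the elementary symmetric polynomials of the non-trivial Laplacian eigenvalues, is monotonically increasing in each coordinate $S_j(z)$ on the relevant domain. Since $A(y,L)=\mathcal{A}_y(S_1(\rho^L),\ldots,S_{v-1}(\rho^L))$ and the coordinate-wise inequality $S_j(\rho^L)\geq S_j(\rho^{L'})$ holds for all $j$, evaluating $\mathcal{A}_y$ at the two tuples and using coordinate-wise monotonicity yields $A(y,L)\geq A(y,L')$ for every competing $L'\in\Lam_1(v,b,k)$.

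The only substantive step is the $A$-claim: the $D$-claim is a direct consequence of the polynomial identity (\ref{Dvalue general}) and reduces to coefficient-wise inequalities, but the $A$-value is a ratio in which each $S_j$ appears in both numerator and denominator (see (\ref{Avalue general})), so its monotonicity in the $S_j$'s is not self-evident from that formula. This is precisely the content of Lemma \ref{A-value increases}, after which no further work is required; no bound on $y$ beyond positivity is needed, in contrast to Corollary \ref{proposition stable order for y_0} and Proposition \ref{Special case of Cheng}, because the hypothesis $L\in\Lam_{v-1}(v,b,k)$ is strong enough to make every coefficient-level comparison go the same way.
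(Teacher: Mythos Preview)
Your proof is correct and follows exactly the paper's approach. The paper's own proof reads in full ``For the $D$-value this is immediately clear and for the $A$-value it follows from Lemma \ref{A-value increases}''; you have simply spelled out the $D$-case via (\ref{Dvalue general}) and made explicit how the coordinate-wise monotonicity of Lemma \ref{A-value increases} is applied.
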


\begin{proof}
For the $D$- value this is immediately clear and for the $A$-value it follows from Lemma \ref{A-value increases}.
\end{proof}
Note that for $j=1,\ldots,v-1$, the function $S_j$ is increasing and Schur-concave on ${\mathbb R}^{v-1}_{\geq0}$, and if $j\not=1$, then $S_j$ is strictly Schur-concave on ${\mathbb R}^{v-1}_{>0}$ (for example \cite{Beckenbach}, pp. 78).
In particular, if a Schur-optimal design with Laplacian matrix $L\in  \Lam_1(v,b,k)$ exists, then $L\in  \Lam_{v-1}(v,b,k)$.

A special emphasis lies on the fact that the BIBD-extended designs in Theorem \ref{Theorem1} are both $A$- \emph{and} $D$-optimal.   It has been conjectured that among RGDs the same design is $A$-optimal and $D$-optimal \cite{JohnMitchellConjectures}.   Theorem \ref{Theorem1} proves the conjecture for a special case  of BIBD-extended RGDs, however the conjecture is not true in general \cite{Cakiroglu}. 

\section{$A$- and $D$-optimal BIBD-extended designs in $\Lam_{v-1}(v,b,2)$, $b\in \{v-1,v\}$ }\label{example}

For $k=2$, every block of a binary design is represented by an edge in the concurrence graph  and we will not make a distinction between the graph and the design. In this section we assume all designs to be connected.

\subsection{The case $b=v-1$}

Among connected  designs, the $A$-optimal design  is   the \emph{reference design}; this is a design that has one treatment occuring in every block and all other blocks contain each other treatment \cite{RosemaryExamples}. Its concurrence graph is the star graph where one vertex is connected to every other vertex by exactly one edge and no other edges. 
All connected graphs on $v$ vertices and $v-1$  perform equally on the $D$-criterion for $y=0$ \cite{RosemaryExamples}. The path $Path(v)$ is another graph in this class;  this graph has vertices $w_1,\ldots,w_v$ such that $w_i$ and $w_{i+1}$ are joined by an edge for $i=1,\ldots,v-1$. 
The Laplacian matrix of the path $Path(v)$  is in $\Lam_{v-1}(v,v-1,2)$ and the star graph minimizes $S_j$ for all $j=1,\ldots,v-1$ among all connected graphs on $v$ vertices with $v-1$ edges \cite{ZhouGutman}. 
This proves the following proposition. 
\begin{proposition}\label{propopsition path}
 Any design with Laplacian matrix $L(Path(v))[y]$ is  $D$-opti\-mal ($A$-optimal) for and $y\geq 0$ ($y\geq 1$) among  connected designs with Laplacian matrix $L[y]$ with $L\in\Lam_1(v,v-1,2)$.
\end{proposition}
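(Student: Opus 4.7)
The plan is to deduce the proposition directly from Theorem~\ref{Theorem1} together with the Zhou--Gutman fact already cited in the paragraph preceding the statement: $L(Path(v))\in\Lam_{v-1}(v,v-1,2)$. First I would make the setup explicit: when $k=2$ each block is an edge, so the connected members of $\Lam_1(v,v-1,2)$ are precisely the Laplacians of trees on $v$ vertices. The only BIBD on $v$ treatments with $k=2$ is the complete graph $K_v$ (with $\tilde\lambda=1$), so $L[y]=L+yL(K_v)$ and the non-trivial eigenvalues of $L[y]$ are $vy+\rho_i^L$.

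Once this is in place, the $A$-claim is essentially immediate. Since $L(Path(v))$ maximizes $S_j(\rho^L)$ for every $j=1,\ldots,v-1$, and since $y\geq 1$ is a positive real, Lemma~\ref{A-value increases} (via Theorem~\ref{Theorem1}) applies and yields both $A$- and $D$-optimality of $L(Path(v))[y]$ in that range among all competing trees.

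For the boundary $y=0$ on the $D$-criterion I would argue separately. The quantity to maximize is $D(0,L)=\prod_{i=1}^{v-1}\rho_i^L$, and by the Matrix-Tree Theorem this product equals $v\cdot\tau(\G)$, where $\tau(\G)$ is the number of spanning trees of the concurrence graph $\G$. For any tree on $v$ vertices $\tau(\G)=1$, hence $D(0,L)=v$ is the same constant for every Laplacian in the class, and the path (trivially) attains the maximum there.

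The proposition is really a corollary of Theorem~\ref{Theorem1} and the cited extremal fact for trees; no new estimates are required. The only mild subtlety, and the point I would flag in the write-up, is the asymmetry between the two criteria at $y=0$: the determinantal symmetric function $S_{v-1}$ fails to separate trees, so $D$ is constant at $y=0$, whereas by \cite{RosemaryExamples} the star (not the path) is the $A$-optimum at $y=0$, which explains why $A$-optimality is only asserted starting from $y\geq 1$.
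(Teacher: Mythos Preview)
Your proposal is correct and follows essentially the same route as the paper: both invoke the Zhou--Gutman result to place $L(Path(v))$ in $\Lam_{v-1}(v,v-1,2)$ and then appeal to Theorem~\ref{Theorem1} for $y\geq 1$, handling the $D$-criterion at $y=0$ by noting all trees tie there. The only cosmetic difference is that you justify the $y=0$ tie via the Matrix--Tree Theorem whereas the paper simply cites \cite{RosemaryExamples} for the same fact.
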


\subsection{The case $b=v$}

The cycle $Cycle(v)$ on $v$ vertices is a path with vertices $w_1,\ldots,w_v,w_{v+1}=w_1$. Bailey showed, that  $Cycle(v)$ is the adjacency graph of a connected $D$-optimal  design and a connected $A$-optimal  design only for $v\leq 8$ and $v=12$  \cite{RosemaryExamples}. For $9\leq v\leq 11$, the quadrangle of which one vertex is joined to all the remaining $v-4$ vertices  is $A$-optimal. For $v\geq 13$ the triangle of which one vertex is joined to the remaining  $v-3$ vertices is $A$-optimal; we will denote this graph by $C_3(v-3)$. For $v=12$, both $Cycle(12)$ and $C_3(9)$ give $A$-optimal binary designs. It is  interesting to note  that   $C_3(v-3)$ minimizes $S_j$ for all $j=2,\ldots,v-1$ among all simple connected graphs with $v$ edges, and further $Cycle(v)$ maximizes $S_j$ for all $j=2,\ldots,v-1$ among all simple connected graphs with $v$ edges  \cite{Stevanovic}. Since all designs in the class with Laplacian matrix in $\Lam_2(v,v,2)$ have a simple  concurrence graph, it follows that $L(Cycle(v))\in\Lam_{v-1}(v,v,2)$. This proves the following proposition.

\begin{proposition}\label{propopsition cycle}
 Any design with Laplacian matrix $L(Cycle(v))[y]$ is  $D$-opti\-mal ($A$-optimal) for any $y\geq 0$  ($y\geq 1$)  among  connected designs with Laplacian matrix $L[y]$ with $L\in\Lam_1(v,v,2)$.
\end{proposition}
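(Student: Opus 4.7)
The plan is to reduce the whole claim to a single structural assertion, namely $L(Cycle(v)) \in \Lam_{v-1}(v,v,2)$, and then to invoke Theorem \ref{Theorem1} on the $y \geq 1$ range and handle $y = 0$ by inspecting the $D$-polynomial's constant term. Once $L(Cycle(v))$ is placed in $\Lam_{v-1}(v,v,2)$, Theorem \ref{Theorem1} directly gives $A$- and $D$-optimality of $L(Cycle(v))[y]$ for every integer $y > 0$ among $L'[y]$ with $L' \in \Lam_1(v,v,2)$, which covers $y \geq 1$ for both criteria. For the remaining $y = 0$ case in the $D$-criterion, I use $D(0, L_d) = \prod_i \rho_i^{L_d} = S_{v-1}(\rho^{L_d})$ (from Equation \eqref{Dvalue general} with the $vy$ terms vanishing), so $D$-optimality at $y=0$ is the same as maximising $S_{v-1}$, which is built into membership in $\Lam_{v-1}(v,v,2)$.

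To establish $L(Cycle(v)) \in \Lam_{v-1}(v,v,2)$, I would chain together three ingredients already set up in the paper. First, $Cycle(v)$ is $2$-regular with every pair of treatments concurring in $\{0,1\}$ blocks, so it is the concurrence graph of an RGD with $r=2$, and hence $L(Cycle(v)) \in \Lam_2(v,v,2)$ by \cite{ChengMS}. Second, as noted in the paragraph preceding the proposition, every $L \in \Lam_2(v,v,2)$ has a simple concurrence graph: being $(M.S)$-optimal forces equireplicate $r=2$, and the only connected $2$-regular graphs on $v$ vertices (simple or multi) for $v \geq 3$ is $Cycle(v)$ itself. Third, Stevanovic's theorem \cite{Stevanovic} tells us that $Cycle(v)$ maximises $S_j(\rho^L)$ for every $j = 2, \ldots, v-1$ among simple connected graphs on $v$ vertices with $v$ edges; combined with the fact that $S_1$ is constant across $\Lam_1(v,v,2)$ (Kiefer's observation, reflected in the remark that $\Lam_1$ is the set of all binary design Laplacians), this places $L(Cycle(v))$ at the joint maximum of $S_1, \ldots, S_{v-1}$, i.e. in $\Lam_{v-1}(v,v,2)$.

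The main obstacle is the third step: Stevanovic's result is stated for simple graphs, whereas $\Lam_1(v,v,2)$ contains designs whose concurrence graphs may have repeated edges (blocks repeated in the design). The cleanest way to close this gap is to compare any non-simple competitor $L' \in \Lam_1(v,v,2) \setminus \Lam_2(v,v,2)$ directly. Such an $L'$ must be non-equireplicate (since a connected equireplicate $r=2$ multigraph on $v \geq 3$ vertices collapses to $Cycle(v)$), and a short computation of $\mathrm{tr}(L'^2) = \sum_i L'^2_{ii} + 2\sum_{i<j} L'^2_{ij}$ shows that both non-equireplicate replications and multi-edges strictly increase $\mathrm{tr}(L'^2)$ relative to $Cycle(v)$ with its uniform $r=2$ and unit off-diagonals; since $S_2 = \tfrac{1}{2}(S_1^2 - \mathrm{tr}(L'^2))$, this gives $S_2(\rho^{L'}) < S_2(\rho^{L(Cycle(v))})$. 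Hence all non-simple competitors are already beaten at $S_2$, and the remaining comparison on $S_3, \ldots, S_{v-1}$ is the simple-connected case to which Stevanovic applies. This delivers $L(Cycle(v)) \in \Lam_{v-1}(v,v,2)$ and completes the proof along the lines above.
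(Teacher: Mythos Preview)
Your approach is essentially the paper's: argue that $L(Cycle(v))\in\Lam_{v-1}(v,v,2)$ by combining the observation that $\Lam_2(v,v,2)$ consists of simple concurrence graphs with Stevanovi\'c's maximisation result \cite{Stevanovic}, and then invoke Theorem~\ref{Theorem1}. Your treatment of the $y=0$ case for the $D$-criterion via $D(0,L_d)=S_{v-1}(\rho^{L_d})$ is a reasonable alternative to the paper's appeal to Bailey \cite{RosemaryExamples}.

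Where you go beyond the paper is in flagging the non-simple competitors in $\Lam_1(v,v,2)$ and attempting to dispose of them. That attempt, however, does not close the gap you identify. Your trace computation shows only that a non-simple $L'$ satisfies $S_2(\rho^{L'})<S_2(\rho^{L(Cycle(v))})$, i.e.\ $L'\prec L(Cycle(v))$ in the stable order. This does \emph{not} give $S_j(\rho^{L'})\le S_j(\rho^{L(Cycle(v))})$ for $j\ge 3$, and it is precisely the simultaneous domination in every $S_j$ that the definition of $\Lam_{v-1}$ demands and that Theorem~\ref{Theorem1} (through Lemma~\ref{A-value increases}) actually uses. So the sentence ``This delivers $L(Cycle(v))\in\Lam_{v-1}(v,v,2)$'' is not supported by the preceding $S_2$ argument; a non-simple multigraph could in principle have smaller $S_2$ yet larger $S_{j}$ for some $j\ge 3$, which would leave $\Lam_{v-1}$ empty. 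The paper glosses over this same point, so your write-up is no weaker than the original, but since you explicitly raise the issue you should either supply a genuine bound on $S_j$ for multigraphs (e.g.\ by extending the Stevanovi\'c--Ili\'c argument, or by a spanning-forest count that tolerates multi-edges) or acknowledge that the non-simple case is being handled at the same level of rigour as the paper.
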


\section{$A$- and $D$-best BIBD-extended RGDs}\label{RGD}
From now on we assume that $bk/v=r\in{\mathbb N}$, i.e. $\Lam_2(v,b,k)$ is the set of Laplacian matrices of any existing RGDs.
Let $d$ be an RGD with Laplacian matrix $L_d\in \Lam_2(v,b,k)$ with $\lambda=\lfloor r(k-1)/(v-1)\rfloor$,  and $\tilde{d}$ be a BIBD on $v$ treatments and $\tilde{b}$ blocks of size $k$ with Laplacian matrix $L_{\tilde{d}}$ and concurrence parameter $\tilde{\lambda}$. Further, let $x=\lambda+y\tilde{\lambda}$. We can write the Laplacian matrix $L_d+yL_{\tilde{d}}$ of the BIBD-extended RGD as 
\begin{eqnarray*}
L[x,{T}_d]=(\delta+vx){I}_v-{T}_d-x{J}_v, \ \delta=r(k-1)-\lambda(v-1).
\end{eqnarray*}
The matrix $T_d$ is the adjacency matrix of a regular, simple graph $\G_d$. 
Let $\psi_1^{{\G}_d}\geq \psi_2^{{\G}_d}\geq\ldots \geq \psi_v^{{\G}_d}$ denote the eigenvalues of $L(\G_d) = \delta{I}_v-{T}_d$. Then,  $\psi_{v}^{{\G}_d}=0$ since ${\G}_d$ has row sum $\delta$. The non-trivial eigenvalues of $L[x,\G_d]$ are $vx+\psi_1^{{\G}_d},\ldots,vx+\psi_{v-1}^{{\G}_d}$. As before, we can express the $A$- and $D$-value as a rational function and polynomial in $y$, respectively: 
\begin{eqnarray} \label{A-value as D/D'}
A(x,\G_d)
&=&\frac{(v-1)\sum_{j=0}^{v-1}(vy)^{v-1-j}S_j( \psi_i^{{\G}_d})}{\sum_{j=0}^{v-2}(vy)^{v-2-j}(v-1-j) S_j( \psi_i^{{\G}_d}) }\\\label{D-value}
D(x,\G_d) &=&  \sum_{j=0}^{v-1}(vy)^{v-1-j}S_j( \psi_i^{\G_d}).
\end{eqnarray}

\begin{proposition}\label{proposition lower bound D} 
Let  $\delta=r(k-1)-\lambda(v-1)$ and suppose, $\Lam_2(v,b,k)\not=\Lam_3(v,b,k)$.
\begin{enumerate}

\item For given $v,b,k$ and $x_0=\frac{1}{v}\left((2\delta)^{v-1}\binom{v-1}{\lfloor\frac{v-1}{2}\rfloor}+1\right)$, if there exist designs with Laplacian matrix $L[x]$ with $L\in \Lam_3(v,b,k)$ for  $x\geq x_0$, then they are $D$-optimal among all designs with Laplacian matrix $L'[x]$ with $L'\in\Lam_2(v,b,k)$.

\item For given $v,b,k$ and $x_0=(2\delta)^{v-1}\binom{v-1}{\lfloor\frac{v-2}{2}\rfloor}^2(v-3)+\frac{1}{v}$, if there exist designs with Laplacian matrix $L[x]$ with $L\in \Lam_3(v,b,k)$ for  $x\geq x_0$, then they are $A$-optimal among all designs with Laplacian matrix $L'[x]$ with $L'\in\Lam_2(v,b,k)$.
\end{enumerate}

\end{proposition}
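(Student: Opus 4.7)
The plan is to adapt the proof of Proposition~\ref{Special case of Cheng} to the present setting, with the role of $\Lam_1(v,b,k)$ taken by $\Lam_2(v,b,k)$ and the eigenvalue bound from Lemma~\ref{lemma bound largest eigenvalue} replaced by a regular-graph bound. Any two $L,L' \in \Lam_2(v,b,k)$ already agree on $S_0, S_1$ and $S_2$ of the non-trivial Laplacian spectrum, so the first elementary symmetric polynomial on which they can differ is $S_3$; by the spanning-forest interpretation cited after Equation~(\ref{Avalue general}) this difference is a non-negative integer, hence $S_3(\psi^{L}) - S_3(\psi^{L'}) \geq 1$ whenever $L \in \Lam_3$ and $L' \in \Lam_2 \setminus \Lam_3$. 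Next I would update the bound on the non-trivial eigenvalues: since $T_d$ is the adjacency matrix of a $\delta$-regular simple graph, its spectrum lies in $[-\delta, \delta]$, so the non-trivial eigenvalues of $\delta I_v - T_d$ satisfy $0 \leq \psi_i^{\G_d} \leq 2\delta$ and therefore $0 \leq S_j(\psi^{\G_d}) \leq \binom{v-1}{j}(2\delta)^j$. The hypothesis $\Lam_2 \neq \Lam_3$ forces $\delta \geq 1$ (otherwise $T_d = 0$, the RGD is a BIBD with every $S_j$ equal to $\binom{v-1}{j}(\lambda v)^j$, and $\Lam_2 = \Lam_3$), which legitimises the estimate $(2\delta)^j \leq (2\delta)^{v-1}$ used below.

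For part~(1), since $L, L' \in \Lam_2$ agree on $S_0, S_1, S_2$, Equation~(\ref{D-value}) yields
\[
D(x, L) - D(x, L') = (vx)^{v-4}\bigl[S_3(\psi^{L}) - S_3(\psi^{L'})\bigr] + \sum_{j=4}^{v-1}(vx)^{v-1-j}\bigl[S_j(\psi^{L}) - S_j(\psi^{L'})\bigr].
\]
The leading term is at least $(vx)^{v-4}$ by the integrality observation. I would bound each tail summand by $\binom{v-1}{\lfloor (v-1)/2 \rfloor}(2\delta)^{v-1}$ and sum the resulting geometric series $\sum_{j=4}^{v-1}(vx)^{v-1-j} = ((vx)^{v-4}-1)/(vx-1)$; the tail is then strictly less than $(vx)^{v-4}$ provided $vx - 1 \geq \binom{v-1}{\lfloor (v-1)/2 \rfloor}(2\delta)^{v-1}$, i.e., provided $x \geq x_0$, so that $D(x, L) > D(x, L')$.

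For part~(2), I would invoke the auxiliary polynomial $P(x, L, L')$ constructed before Corollary~\ref{proposition stable order for y_0} (so that $P(x, L, L') \geq 0$ is equivalent to $A(x, L) \geq A(x, L')$). Agreement of $L$ and $L'$ on $S_0, S_1, S_2$ makes $p_{2v-3-i} = 0$ for $i = 0, 1, 2$, and the first non-vanishing coefficient is $p_{2v-6} = 3v^{2v-6}[S_3(\psi^{L}) - S_3(\psi^{L'})] \geq 3v^{2v-6}$. For each $i \in \{4, \ldots, 2v-3\}$ I would use the bound $|S_j(\psi^{L}) S_{i-j}(\psi^{L'}) - S_j(\psi^{L'}) S_{i-j}(\psi^{L})| \leq \binom{v-1}{\lfloor (v-2)/2 \rfloor}^2 (2\delta)^i$, together with the identity $\sum_{j=0}^{i}(v-1-j) = (i+1)(2v-2-i)/2$ and the geometric-series estimate from Proposition~\ref{Special case of Cheng}(2), to show that the leading coefficient dominates the tail once $x \geq x_0$. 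The only obstacle is computational bookkeeping: the argument is essentially a line-by-line port of the proof of Proposition~\ref{Special case of Cheng}, with $2b(k-1)$ replaced by $2\delta$ and the leading symmetric-polynomial index shifted from $l = 2$ to $l = 3$, so no conceptually new idea is required.
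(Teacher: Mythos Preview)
Your proposal is correct and follows essentially the same route as the paper: replace the eigenvalue bound $2b(k-1)$ by the regular-graph bound $2\delta$, use that $L,L'\in\Lam_2(v,b,k)$ agree on $S_0,S_1,S_2$ so the leading discrepancy occurs at $S_3$, bound each $|S_j-S_j'|$ by $(2\delta)^{v-1}\binom{v-1}{\lfloor(v-1)/2\rfloor}$, and sum the geometric tail exactly as in Proposition~\ref{Special case of Cheng}. Your explicit remarks that $S_3(\psi^{\G_d})-S_3(\psi^{\G'})\in\mathbb{Z}_{\ge 1}$ via the spanning-forest count and that the hypothesis $\Lam_2\neq\Lam_3$ forces $\delta\ge 1$ (so $(2\delta)^j\le(2\delta)^{v-1}$ is legitimate) are justifications the paper uses tacitly; otherwise the arguments coincide line by line.
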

\begin{proof}
Let $L\in\Lam_2(v,b,k)$, $L = L[x,T_d]$ where $T_d=A_{\G_d}$  such that $S_3(\psi^{\G_d})\geq S_3(\rho^{\G'})$ for any $\delta$-regular simple graph $\G'$ on $v$ vertices.

\begin{enumerate}
\item
The Laplacian eigenvalues of a (not necessarily connected) $\delta$-regular graph are bounded from above by $2\delta$ \cite{Das}  and with Lemma \ref{D-value Bionom}  \[|S_j(\psi^{\G_d})- S_j(\psi^{\G'})|\leq \binom{v-1}{j}(2\delta)^j\leq(2\delta)^{v-1} \binom{v-1}{\lfloor\frac{v-1}{2}\rfloor},\ j=4,\ldots,v-1.\] It follows for $x\geq \frac{1}{v}\left((2\delta)^{v-1}\binom{v-1}{\lfloor\frac{v-1}{2}\rfloor}+1\right)$ that
\begin{eqnarray*}
 \sum_{j=4}^{v-1}v^{v-1-j}x^{v-1-j}|S_j(\psi^{\G_d})- S_j(\psi^{\G'})|
&\leq&(2\delta)^{v-1}\binom{v-1}{\lfloor\frac{v-1}{2}\rfloor}\frac{(vx)^{v-4}-1}{vx-1}\\
&\leq&(vx-1)\frac{(vx)^{v-4}-1}{vx-1}\\
&<&v^{v-4}x^{v-4}\\
&\leq& v^{v-4}x^{v-4}[S_3(\psi^{\G_d})-S_3(\psi^{\G'})].
\end{eqnarray*}
\item 
Similarly, with Lemma \ref{A-value Bionom} we have 
\[
|S_j(\psi^{\G_d}) S_{i-j}(\psi^{\G'})-S_j(\psi^{\G'}) S_{i-j}(\psi^{\G_d})|\leq (2\delta)^i\binom{v-1}{m}^2,\ j=4,\ldots,v-1.
\]
It follows for $x\geq (2\delta)^{v-1}\binom{v-1}{m}^2(v-3)+\frac{1}{v}$ that
\begin{eqnarray*}
 &&\sum_{i=4}^{v-1}(vx)^{2v-3-i}\sum_{j=0}^i(v-1-j)|S_j(\psi^{\G_d}) S_{i-j}(\psi^{\G'})-S_j(\psi^{\G'}) S_{i-j}(\psi_{\G_d})|\\
&\leq&(2\delta)^{v-1}\binom{v-1}{m}^2\sum_{i=4}^{v-1}(vx)^{2v-3-i}\sum_{j=0}^i(v-1-j)\\
&=&(2\delta)^{v-1}\binom{v-1}{m}^2\sum_{i=4}^{v-1}(vx)^{2v-3-i}(i+1)\frac{2(v-1)-i}{2}\\
&<&(2\delta)^{v-1}\binom{v-1}{m}^2v(v-3)(vx)^{v-2}\sum_{i=0}^{v-5}(vx)^{i}\\
&\leq&(vx-1)(vx)^{v-2}\frac{(vx)^{v-4}-1}{vx-1}\\
&<&(vx)^{2v-6}\\
&\leq& 3v^{2v-6}x^{2v-6}\left[S_3(\psi^{\G_d})-S_3(\psi^{\G'})\right].
\end{eqnarray*}
\end{enumerate}
\end{proof}
The bounds in Proposition \ref{proposition lower bound D} almost certainly can be improved; compuational results in our previous work showed $x_0 = \delta +1$ in all considered cases \cite{Cakiroglu}.

The RGD $d$ is fully characterized by the graph $\G_d$.
We therefore want to characterize the simple  regular graphs $\G$ on $v$ vertices that give rise to  $A$- and $D$-best RGDs for large $x$ in terms of the eigenvalues of $L(\G_d)$. We will need the following definition: the \emph{complement} $\bar{\G}$ of a graph $\G$ is the graph defined by the adjacency matrix $J_v - I_v - A_{\G}$, where $A_{\G}$ is the adjacency matrix of $\G$. 
The following proposition relates the $A$- and $D$-values given as in Equations (\ref{A-value as D/D'}) and  (\ref{D-value})  of with the complement $\bar{\G}$ of $\G$.

\begin{proposition}\label{proposition symmetric functions complement}
Let $\G$ be a simple (not necessarily connected) graph and $\bar{\G}$ its complement. Then
\[
S_j(\psi^{\bar{\G}})=\sum_{k=0}^j\binom{v-k-1}{j-k}(-1)^kv^{j-k}S_k(\psi^{\G}).
\] 
\end{proposition}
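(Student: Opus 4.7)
The plan is to reduce the claim to two ingredients: (i) a classical Laplacian identity that sends the spectrum of $\bar{\G}$ to that of $\G$ via the affine map $\psi\mapsto v-\psi$ on the non-trivial eigenvalues, and (ii) a generating-function expansion of the elementary symmetric polynomials under this affine map.

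For (i), I would invoke the identity $L_{\G} + L_{\bar{\G}} = vI_v - J_v$, which follows since $\G$ is simple from $A_{\G} + A_{\bar{\G}} = J_v - I_v$ and the degree relation $\delta_i^{\bar{\G}} = v-1-\delta_i^{\G}$. Both Laplacians share the all-ones vector in their kernel, and $J_v$ acts as zero on the orthogonal complement of this vector. Restricted to this $(v-1)$-dimensional subspace, the identity reads $L_{\bar{\G}} = vI_v - L_{\G}$, so the non-trivial Laplacian eigenvalues of $\bar{\G}$ are precisely $\{v-\psi_i^{\G} : 1\leq i\leq v-1\}$ as a multiset.

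For (ii), I would use the generating-function definition of the $S_j$:
\begin{align*}
\sum_{j=0}^{v-1} t^j S_j(\psi^{\bar{\G}}) &= \prod_{i=1}^{v-1}\bigl(1 + t(v-\psi_i^{\G})\bigr) = \prod_{i=1}^{v-1}\bigl((1+tv) - t\psi_i^{\G}\bigr) \\
&= \sum_{k=0}^{v-1} (-t)^k (1+tv)^{v-1-k} S_k(\psi^{\G}).
\end{align*}
Expanding $(1+tv)^{v-1-k}$ with the binomial theorem and extracting the coefficient of $t^j$ yields $\sum_{k=0}^{j}(-1)^k v^{j-k}\binom{v-k-1}{j-k} S_k(\psi^{\G})$, which is the claimed identity.

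The argument encounters no real obstacle: once the complement identity transforms the spectral question into the affine rescaling of non-trivial eigenvalues, the remaining step is pure bookkeeping. The only mild subtlety is that permutations of eigenvalues do not affect $S_j$, so one need not worry about tracking the induced reordering between $\psi_i^{\bar{\G}}$ and $v-\psi_{v-i}^{\G}$.
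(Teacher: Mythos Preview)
Your proof is correct. Both your argument and the paper's proof rest on the same key observation that the non-trivial Laplacian eigenvalues of $\bar{\G}$ are $v-\psi_i^{\G}$; the paper simply asserts this, whereas you derive it from $L_{\G}+L_{\bar{\G}}=vI_v-J_v$. The difference lies in how the algebraic expansion is carried out: the paper expands $S_j(\psi^{\bar{\G}})=\sum_{|\mathcal{J}|=j}\prod_{i\in\mathcal{J}}(v-\psi_i^{\G})$ directly and then uses a combinatorial count (the number of $j$-subsets of $\{1,\dots,v-1\}$ containing a fixed $k$-subset is $\binom{v-k-1}{j-k}$) to collect terms, while you package the same computation in the generating function $\prod_i(1+t\psi_i^{\bar{\G}})$ and extract coefficients via the binomial theorem. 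Your route is arguably cleaner and avoids the explicit double-sum counting, but the two arguments are essentially equivalent reorganisations of the same polynomial identity.
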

\begin{proof}
The non-trivial Laplacian eigenvalues of the complement $\bar{\G}$ are  $v-\psi_i^{\G}$ for $i=1,\ldots,v-1$. 

Therefore 
\begin{eqnarray*}
  S_j(\psi_{\bar{\G}})
 &=&\sum_{{\mathcal J}\subset {\mathcal I},|{\mathcal J}|=j}\prod_{i\in {\mathcal J}}(v-\psi_i^{\G})\\
&=&\sum_{k=0}^j(-1)^kv^{j-k}\sum_{\begin{subarray}{c} J\subseteq  {\mathcal I},\\|{\mathcal J}|=j\end{subarray}}\sum_{\begin{subarray}{c}  {\mathcal K}\subseteq {\mathcal J},\\|{\mathcal K}|=k\end{subarray}}\prod_{i\in {\mathcal K}} \psi_i^{\G}.
\end{eqnarray*}
In the last sum,  the elementary symmetric polynomial $S_k(\psi^{\G})$ occurs as often as the number of ways of choosing $j$ elements with a fixed  subset of  $k$ from a set of $v-1$ elements, that is $\binom{v-k-1}{j-k}$.
Hence,
\[
\sum_{\begin{subarray}{c} {\mathcal J}\subseteq  {\mathcal I},\\|{\mathcal J}|=j\end{subarray}}\sum_{\begin{subarray}{c} {\mathcal K}\subseteq {\mathcal J},\\|{\mathcal K}=k\end{subarray}}\prod_{i\in {\mathcal K}}\psi_i^{\G}=\binom{v-k-1}{j-k}S_k(\psi^{\G})
\]and the statement follows.
\end{proof}

\begin{lemma}\label{lemma S_3}
 Let  $\G$ and $\G'$ be simple  $\delta$-regular graphs on $v$ vertices. Then 
\[
S_3(\psi^{\G})-S_3(\psi^{\G'})=\frac{2}{3}(\eta(\G)-\eta(\G')),
\]where $\eta$ is the number of V-subgraphs of $\G$ and $\G'$, that is three vertices with exactly two edges.
\end{lemma}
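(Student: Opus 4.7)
The plan is to express $S_3(\psi^{\G})$ via Newton's identities in terms of the power sums $p_k(\G) := \sum_{i=1}^{v-1}(\psi_i^{\G})^k = \operatorname{tr}(L(\G)^k)$ (the last equality holds because $\psi_v^{\G}=0$), show that for a $\delta$-regular graph $p_1$ and $p_2$ depend only on $v,\delta$, and that $p_3$ decomposes as a universal part plus a multiple of the triangle count. A parallel combinatorial identity for $\eta$ will then give the factor $2/3$.

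First I would write Newton's identity
\[
3\,S_3(\psi^{\G}) \;=\; S_2(\psi^{\G})\,p_1(\G) - S_1(\psi^{\G})\,p_2(\G) + p_3(\G),
\]
and note that $S_1$ and $S_2$ are themselves determined by $p_1$ and $p_2$ via $S_1=p_1$ and $2S_2=p_1^2-p_2$. Hence $S_3(\psi^{\G})-S_3(\psi^{\G'})$ reduces to a linear combination of $p_k(\G)-p_k(\G')$ for $k\le 3$, and any $p_k$ that only depends on $v$ and $\delta$ drops out.

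Second, for a $\delta$-regular simple graph with adjacency matrix $A$ and Laplacian $L=\delta I - A$, expanding $L^k$ I would compute:
\begin{align*}
p_1(\G) &= v\delta,\\
p_2(\G) &= \operatorname{tr}(\delta^2 I - 2\delta A + A^2) = v\delta^2 + \operatorname{tr}(A^2) = v\delta^2 + v\delta,\\
p_3(\G) &= \operatorname{tr}(\delta^3 I - 3\delta^2 A + 3\delta A^2 - A^3) = v\delta^3 + 3v\delta^2 - \operatorname{tr}(A^3),
\end{align*}
using $\operatorname{tr}(A)=0$ and $\operatorname{tr}(A^2)=\sum_i \delta_i = v\delta$. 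Since $\operatorname{tr}(A^3)=6\,T(\G)$ where $T(\G)$ is the number of triangles of $\G$, only $p_3$ varies among $\delta$-regular graphs on $v$ vertices. Consequently the formula above collapses to
\[
S_3(\psi^{\G})-S_3(\psi^{\G'}) \;=\; \tfrac{1}{3}\bigl(p_3(\G)-p_3(\G')\bigr) \;=\; -2\bigl(T(\G)-T(\G')\bigr).
\]

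Third, I would relate $\eta$ to $T$ by a direct count. The number of length-$2$ paths of $\G$ centred at a vertex $i$ is $\binom{\delta_i}{2}$, so the total count of ordered centred pairs is $\sum_i \binom{\delta_i}{2}=v\binom{\delta}{2}$ under $\delta$-regularity. Each such triple $\{a,b,c\}$ is either a V-subgraph (two edges, the endpoints non-adjacent) contributing $1$, or a triangle contributing $3$ (one centre per vertex). Therefore $v\binom{\delta}{2}=\eta(\G)+3T(\G)$, whence
\[
\eta(\G)-\eta(\G') \;=\; -3\bigl(T(\G)-T(\G')\bigr),
\]
and combining this with the previous display yields $S_3(\psi^{\G})-S_3(\psi^{\G'}) = \tfrac{2}{3}(\eta(\G)-\eta(\G'))$, as required.

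The main obstacle is essentially bookkeeping: keeping the Newton's identity correct and making sure the trace formulas use $\operatorname{tr}(A)=0$ and $\operatorname{tr}(A^2)=v\delta$ in the right places. The combinatorial identity $v\binom{\delta}{2}=\eta+3T$ needs the convention that $\eta$ counts induced $3$-vertex subgraphs with exactly two edges (i.e. non-triangular cherries), which matches the statement "three vertices with exactly two edges" in the lemma.
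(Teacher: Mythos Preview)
Your proof is correct and takes essentially the same approach as the paper: both use Newton's identities to reduce the claim to $3\bigl(S_3(\psi^{\G})-S_3(\psi^{\G'})\bigr)=p_3(\G)-p_3(\G')$ after observing that $p_1,p_2$ (and hence $S_1,S_2$) depend only on $(v,\delta)$. The only minor difference is in the last step: the paper quotes the formula $\sum_i(\psi_i^{\G})^3=v\delta(\delta+1)^2+2\eta(\G)$ from \cite{PetingiRodriguez} and finishes immediately, whereas you derive the equivalent statement yourself via $\operatorname{tr}(A^3)=6T(\G)$ together with the combinatorial identity $v\binom{\delta}{2}=\eta(\G)+3T(\G)$, which is a slightly more self-contained route to the same endpoint.
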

\begin{proof}
  Since $\G$ and $\G'$ are $\delta$-regular,  $S_1(\psi^{\G})=S_1(\psi^{\G'})$ and $\sum_{j=1}^{v-1}(\psi_i^{\G})^2=\sum_{j=1}^{v-1}(\psi_i^{\G'})^2$. 
With $jS_j(z)=\sum_{l=1}^{j}(-1)^{l-1}S_{j-l}(z)\sum_{i=1}^{v-1}z_i^l$ for $z \in {\mathbb R}^{v-1}$ \cite{McDonald} it follows  that
\begin{eqnarray*}
2S_2(\psi^{\G})&=&S_1(\psi^{\G})^2-\sum_{j=1}^{v-1}(\psi_j^{\G})^2\\
3S_3(\psi^{\G})&=&\sum_{i=1}^3(-1)^{i-1}S_{3-i}(\psi^{\G})\sum_{j=1}^{v-1}(\psi_j^{\G})^j\\
&=&S_2(\psi^{\G})S_1(\psi^{\G})-S_1(\psi^{\G})\sum_{j=1}^{v-1}(\psi_j^{\G})^2+\sum_{j=1}^{v-1}(\psi_j^{\G})^3.
\end{eqnarray*}
and therefore  $S_2(\psi^{\G})=S_2(\psi^{\G'})$. The statement follows directly  with
\[
\sum_{i=1}^{v-1}(\psi_i^{\G})^2=v\delta(\delta+1)
\]and 
\[\sum_{i=1}^{v-1}(\psi_i^{\G})^3=v\delta(\delta+1)^2+2\eta(\G),\]where $\eta(\G)$ is the number of V-subgraphs of $\G$ (\cite{PetingiRodriguez} proved this equation for connected graphs but the argument directly extends to non-connected graphs). 
\end{proof}

\begin{theorem}\label{Corollary v-min D-best} 
Suppose, $\Lam_2(v,b,k)\not=\Lam_3(v,b,k)$. For given $v,b,k$, there exists an $x_0>0$ such that, if there exist designs with Laplacian matrix $L[x]$ with $L\in\Lam_2(v,b,k)$ for  $x\geq x_0$, then the ones whose underlying graph minimizes  the number of V-subgraphs in its complement are $A$- and $D$-optimal among all designs with Laplacian matrix $L'[x]$ with $L'\in\Lam_2(v,b,k)$.
\end{theorem}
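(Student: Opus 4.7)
The plan is to reduce the question of $A$- and $D$-optimality among $L'\in\Lam_2(v,b,k)$ to a combinatorial question about the Laplacian spectra of the underlying regular graphs, and then to translate that spectral condition into a count of V-subgraphs of the complement via Lemma \ref{lemma S_3} and Proposition \ref{proposition symmetric functions complement}.

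First I would invoke Proposition \ref{proposition lower bound D}: for $x$ at least the explicit threshold given there, any $A$- or $D$-optimal design within $\Lam_2(v,b,k)$ must in fact lie in $\Lam_3(v,b,k)$. It therefore suffices to characterise those $L[x,T_d]\in\Lam_2(v,b,k)$ whose underlying regular graph $\G_d$ maximises $S_3(\psi^{\G_d})$ among all simple $\delta$-regular graphs on $v$ vertices, where $\delta=r(k-1)-\lambda(v-1)$.

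Next I would exploit the fact that for any simple $\delta$-regular graph $\G$ on $v$ vertices, both $S_1(\psi^{\G})$ and $S_2(\psi^{\G})$ depend only on $v$ and $\delta$; this follows because $\mathrm{tr}\,L(\G)=v\delta$ and $\mathrm{tr}\,L(\G)^2=v\delta(\delta+1)$, and $S_1,S_2$ can be expressed via Newton's identities (as already used inside the proof of Lemma \ref{lemma S_3}). Substituting $j=3$ into Proposition \ref{proposition symmetric functions complement} and inserting these constants collapses the identity to
\[
S_3(\psi^{\bar{\G}})=C(v,\delta)-S_3(\psi^{\G})
\]
for some constant $C(v,\delta)$ depending only on $v$ and $\delta$. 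Hence maximising $S_3(\psi^{\G_d})$ over simple $\delta$-regular graphs on $v$ vertices is equivalent to minimising $S_3(\psi^{\bar{\G}_d})$ over their complements, which form the family of simple $(v-1-\delta)$-regular graphs on $v$ vertices.

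Finally I would apply Lemma \ref{lemma S_3} to pairs of complements $\bar{\G}_d,\bar{\G}_d'$ (both $(v-1-\delta)$-regular on $v$ vertices) to obtain $S_3(\psi^{\bar{\G}_d})-S_3(\psi^{\bar{\G}_d'})=\tfrac{2}{3}(\eta(\bar{\G}_d)-\eta(\bar{\G}_d'))$, so that minimising $S_3$ on the complements is equivalent to minimising the number of V-subgraphs of the complement. Taking $x_0$ to be the larger of the two thresholds (for the $A$- and $D$-criteria) supplied by Proposition \ref{proposition lower bound D} then closes the argument. The one non-routine step, and thus the main obstacle, is the verification that $S_1$ and $S_2$ are constant on the class of simple $\delta$-regular graphs: this is what allows Proposition \ref{proposition symmetric functions complement} to collapse into the clean complementation identity, and without it the V-subgraph characterisation would not apply directly to the complement $\bar{\G}_d$.
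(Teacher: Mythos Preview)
Your proposal is correct and follows the same underlying route as the paper: reduce via Proposition~\ref{proposition lower bound D} to the problem of maximising $S_3(\psi^{\G_d})$ over simple $\delta$-regular graphs, and then translate that into a V-subgraph count through Lemma~\ref{lemma S_3}. The paper's own proof is a single line citing only Lemma~\ref{lemma S_3} together with $\rho_i=v\lambda+\psi_i$ and the monotonicity of $S_j$, and it never explicitly explains how one passes from $\G_d$ to its complement; your use of Proposition~\ref{proposition symmetric functions complement} at $j=3$ to obtain $S_3(\psi^{\bar\G})=C(v,\delta)-S_3(\psi^{\G})$, followed by Lemma~\ref{lemma S_3} applied to the $(v-1-\delta)$-regular complements, makes that step rigorous and is in fact a cleaner justification than what the paper writes down.
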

\begin{proof}
This follows directly from Lemma \ref{lemma S_3} and the fact that  $\rho_i = v\lambda+\psi_i$ for $i = 1,\ldots,v-1$ and that $S_j(z)$ are increasing in $z$ for $j = 1,\ldots,v-1$.
\end{proof}

Let $K_{\alpha,\ldots,\alpha}$ denote the regular complete $m$-partite graph, that is a graph whose vertex set can be partitioned into $m$ groups of size $\alpha$ such that any pair of vertices is joined by an edge if and only if they are in different groups. An RGD with a multipartite concurrence graph and block size $2$ is a group divisible design. \cite{Cheng1} proved that regular complete bipartite graphs are the concurrence graphs of the unique $A$- and $D$-optimal designs for all $y\geq0$ (not necessarily only among RGDs) and extended his result to complete regular multipartite graphs for $y=0$. The following corollary extends  the latter result  to the BIBD-extension of the complete regular multipartite graph with a large number of blocks.

\begin{corollary}\label{corollary complete reg multipartite graph} 
There exists an $x_0>0$ such that if there exists a design with Laplacian matrix $L[x]$ where $L=L(K_{\alpha,\ldots,\alpha})\in\Lam_2(\alpha m,b,k)$ and $x\geq x_0$,  then it is $A$- and $D$-optimal among all designs with Laplacian matrix $L'[x]$ with $L'\in\Lam_2(\alpha m,b,k)$.
\end{corollary}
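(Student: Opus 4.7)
The plan is to apply Theorem~\ref{Corollary v-min D-best} directly: it suffices to show that $K_{\alpha,\ldots,\alpha}$ is the unique simple $\delta$-regular graph on $v=\alpha m$ vertices (with $\delta=(m-1)\alpha$, the regularity forced by $L(K_{\alpha,\ldots,\alpha})\in\Lam_2(\alpha m,b,k)$) whose complement minimises the V-subgraph count $\eta$. By Lemma~\ref{lemma S_3} this is equivalent to $L(K_{\alpha,\ldots,\alpha})$ being the unique element of $\Lam_2(\alpha m,b,k)$ which maximises $S_3(\psi^{\G})$, and the theorem then delivers $A$- and $D$-optimality for all $x\ge x_0$.

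First I would compute the complement explicitly: $\overline{K_{\alpha,\ldots,\alpha}}$ is the disjoint union $mK_\alpha$ of $m$ cliques of size $\alpha$, a $(v-1-\delta)=(\alpha-1)$-regular graph. In $mK_\alpha$ any two adjacent vertices lie in a common clique and share every other vertex of that clique as a common neighbour, so no three vertices can induce a path of length two; hence $\eta(\overline{K_{\alpha,\ldots,\alpha}})=0$, trivially the absolute minimum over all simple graphs. For uniqueness I would invoke the standard fact that a simple graph contains no induced $P_3$ if and only if every connected component is a clique; combined with $(\alpha-1)$-regularity on $\alpha m$ vertices, each component must have exactly $\alpha$ vertices, forcing the graph to equal $mK_\alpha$ and hence its complement to equal $K_{\alpha,\ldots,\alpha}$.

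The only hypothesis I would need to address separately is $\Lam_2\neq\Lam_3$ in Theorem~\ref{Corollary v-min D-best}. I expect this to be immediate from the uniqueness above: if $\Lam_2=\Lam_3$, then every $L'\in\Lam_2$ shares the same $S_3$-value as $L(K_{\alpha,\ldots,\alpha})$ and, by Lemma~\ref{lemma S_3}, the same vanishing V-subgraph count in its complement; the uniqueness argument then collapses $\Lam_2$ to $\{L(K_{\alpha,\ldots,\alpha})\}$ and the corollary is trivial. The only genuine combinatorial content is therefore the characterisation of $(\alpha-1)$-regular $P_3$-free graphs on $\alpha m$ vertices as $mK_\alpha$, which is short and routine; no step looks like a serious obstacle.
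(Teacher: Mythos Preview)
Your proposal is correct and follows exactly the paper's approach: the paper's one-line proof simply observes that the complement of $K_{\alpha,\ldots,\alpha}$ is a disjoint union of cliques and hence V-subgraph-free, then invokes Theorem~\ref{Corollary v-min D-best}. Your added care about uniqueness and the degenerate case $\Lam_2=\Lam_3$ goes beyond what the paper records, but the core argument is identical.
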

\begin{proof}
 Follows directly from  Theorem \ref{Corollary v-min D-best} and the fact that the complement is a union of cliques and as such V-subgraph-free.
\end{proof}

\newpage
\addcontentsline{toc}{section}{Bibliography}

\bibliographystyle{alpha}

%\bibliography{lit}
%\nocite{*}

\end{document}